\newtheorem{thm}{Theorem}[section]
\newtheorem*{thm*}{Theorem}
\newtheorem{lemma}[thm]{Lemma}
\newcommand{\beq}{\begin{equation}}
\newcommand{\eeq}{\end{equation}}
\newtheorem*{remark}{Remark}
\def\A{\mathbb{A}}
\newcommand{\Q}{\mathbb{Q}}
\newcommand{\R}{\mathbb{R}}
\newcommand{\Z}{\mathbb{Z}}
\newcommand{\PKN}{\Pi_{\underline{k}}(\mathfrak{n})}
\newcommand{\Nr}{\mathrm{N}}
\newcommand{\pfrak}{\mathfrak{p}}
\newcommand{\kommentar}[1]{}
\newtheorem*{remark*}{Remark}
\newtheorem{proposition}{Proposition}[section]
\definecolor{pink}{rgb}{1,.2,.6}
\definecolor{orange}{rgb}{0.7,0.3,0}
\definecolor{blue}{rgb}{.2,.6,.75}
\definecolor{green}{rgb}{.4,.7,.4}
\definecolor{purple}{RGB}{127,0,255}
\begin{document}
\numberwithin{equation}{section}

\title{A central limit theorem for Hilbert Modular Forms }

\author[Das]{Jishu Das}
\address{Indian Institute of Science Education and Research Pune, Dr. Homi Bhabha Road, Pune 411008}
\email{jishu.das@students.iiserpune.ac.in}

\author[Prabhu]{Neha Prabhu}
\address{ Sai University, One Hub Road, Paiyanur, Tamil Nadu 603104}
\email{neha.prabhu@acads.iiserpune.ac.in}

\subjclass[2020]{Primary: 11F41, 11F72, Secondary: 11F30}
\thanks{}

\date{\today}

\begin{abstract} For a prime ideal $\pfrak$ in a totally real number field $L$ with the adele ring $\mathbb{A}$, we study the distribution of angles $\theta_\pi(\pfrak)$ coming from Satake parameters corresponding to unramified $\pi_\pfrak$ where $\pi_\pfrak$ comes from a global  $\pi$ ranging over  a certain finite set $\PKN$ of cuspidal automorphic representations of GL$_2(\mathbb{A})$ with trivial central character.   For such a representation $\pi$, it is known that the angles $\theta_\pi(\pfrak)$ follow the Sato-Tate distribution. Fixing an interval $I\subseteq [0,\pi]$, we prove a central limit theorem for the number of angles $\theta_\pi(\pfrak)$ that lie in $I$, as $\Nr(\pfrak)\to\infty$. The result assumes $\mathfrak{n}$ to be a squarefree integral ideal, and that the components in the weight vector $\underline{k}$ grow suitably fast as a function of $x$.
\end{abstract}
\maketitle 
\section{Introduction}

The statistics of eigenvalues of Hecke operators have been a topic of interest for a few decades. More recently, following the series of papers which settled the Sato-Tate conjecture in various settings, such as \cite{Taylor2008}, \cite{BLGG} \cite{BLGG}, the study of error terms in these theorems has received significant attention, see \cite{Prabhu-Sinha, Baier-Prabhu, Thorner21, HIJT} for example. This article investigates the statistics of the error term in the Sato-Tate theorem for Hilbert Modular Forms, building on a similar study conducted in \cite{Prabhu-Sinha}, which we briefly describe. Let $\mathcal{F}_{N,k}$ be the set of normalized non-CM cusp forms of weight $k$ and level $N$ that are also eigenforms for Hecke operators $T_n$ acting on spaces of cusp forms $S(N,k)$. In particular, any $f\in \mathcal{F}_{N,k}$ has a Fourier expansion $$f(z)= \sum\limits_{n=1}^\infty n^{\frac{k-1}{2}} a_f(n)q^n$$ with $a_f(1)= 1$ and $q= e^{2\pi i z}$. By the work of Deligne, it is known that for $p$ prime with $\gcd(p, N)=1$, the sequence $a_f(p)$ lies in $[-2,2]$ and the Sato-Tate theorem reveals the distribution of this sequence. That is, for a fixed $f\in \mathcal{F}_{N,k}$, if we set $a_f(p) = 2\cos\theta_f(p)$, and fix an interval $I \subseteq [0,\pi]$ and let $$N_I(f,x) := \# \{ p\leq x: \gcd(p,N)=1, \theta_f(p)\in I\}, $$ then $$ \lim\limits_{x\to\infty} \frac{N_I(f,x)}{\pi(x)} = \int_I \mu_\infty(t) ~dt. $$

Here, $\pi(x)$ denotes the number of primes not exceeding $x$, and $\mu_\infty(t) = \frac{2}{\pi} \sin^2 t$ is the function associated with the Sato-Tate measure of the interval $I$. In \cite{Prabhu-Sinha}, it was shown that under suitable growth conditions of the weight $k= k(x)$, the error term in the above asymptotic statement exhibits a Gaussian distribution when one averages over $f$ in $\mathcal{F}_{N,k}$. In this article, we prove the analogous result in the Hilbert modular setting. 

Let $L$ denote a totally real field of degree $d$ over $\Q$ and $\mathcal{O}$ be its ring of integers.  Let $\nu=\nu_{\mathfrak{p}}$ be the discrete valuation associated with the prime ideal $\mathfrak{p}$ in $\mathcal{O}.$ The local field $L_\nu$ is the completion of $L$ with respect to the topology induced by $\nu$, and $\mathcal{O}_\nu$ denotes the ring of integers in the local field $L_\nu.$  Let $\A$ denote the Adele ring of  $\Q$ and $\A_{f}$ denote the finite adeles.  Let $G$ denote the algebraic group that is the Weil restriction of scalars of $\text{GL}_{2/L}$ from $L$ to $\Q$. Let $G_f=G(\A_{f})$ and $G_\infty=G(\R)$. For an integral ideal $\mathfrak{n}$, let  $K_0(\mathfrak{n})$ be the congruence subgroup of level $\mathfrak{n}.$  If the prime factorisation of $\mathfrak{n}$ is given by $\mathfrak{n}=\mathfrak{q}_1^{a_1}\,\dots\, \mathfrak{q}_r^{a_r},$ then the congruence subgroup $K_0(\mathfrak{n})$ is defined as 
$$K_0(\mathfrak{n})=\Bigg\{ \begin{pmatrix}
a & b \\
c & d 
\end{pmatrix} \in \prod_\nu GL_2(\mathcal{O}_\nu)  \,\Big|\,   
c_{\mathfrak{q}_i}\equiv 0 \, \text{mod} \,\mathfrak{q}_i^{a_i} \, \text{for all} \, i=1,\dots,r
\Bigg\}.
$$

Let the weight be given by the  $d$-tuple $\underline{k}=(k_1,\dots,k_d)$ where $k_i$ is an even integer and  $k_i\geq 4$ for all $i=1,\dots , d.$
Let $\mathcal{A}^{\text{cusp}}_{\underline{k}}(G(\Q) \backslash G(\A))$ denote the space of  cuspidal automorphic forms on
$G(\A)$ with trivial central character (see for instance \cite[Section 4]{BorelJacquet} for definitions). The group $G(\A) $ acts on $\mathcal{A}^{\text{cusp}}_{\underline{k}}(G(\Q) \backslash G(\A))$ by right translations. An irreducible representation $\pi$ of $G(\A)$ is called a cuspidal automorphic representation if it is isomorphic to a subrepresentation of $\mathcal{A}^{\text{cusp}}_{\underline{k}}(G(\Q) \backslash G(\A)).$
We have the following decomposition of $\pi=\pi_f\otimes \pi_\infty$ where $\pi_f$ and $\pi_\infty$ are representations of $G_f$ and $G_\infty$ respectively.

Let $\PKN$ be the set of cuspidal unitary automorphic representations $\pi$ with respect to $G$ such that 
\begin{itemize}
    \item $\pi_f$ has a $K_0(\mathfrak{n})$ fixed vector,
    \item $\pi_\infty=\otimes_{i=1}^d D_{k_i-1},$
    where $D_k$ is the discrete series representation of $\text{GL}_2(\R)$ with minimal $K$-type of weight $k+1.$ 
\end{itemize}
The set $\PKN$ is finite (see \eqref{size of PKN}). Let $\pi\in \PKN$  and consider $\mathfrak{p}$, a prime ideal in $L$ for which $\pi_\pfrak$  is unramified.  
The Satake parameter associated with $\pi_\pfrak$ is a conjugacy class 

$$\begin{pmatrix}
e^{i\pi \theta_\pi(\pfrak)} &  \\
 & e^{-i\pi \theta_\pi(\pfrak)}
\end{pmatrix} \in \text{SU(2)}/\sim$$ with $\theta_\pi(\pfrak)\in [0,1].$
For the classical setting of  $L=\Q$ and a classical eigenform $f,$ let $\pi(f) $ denote the automorphic representation associated with $f$. The angles $\theta_f(p)$ considered in \cite{Prabhu-Sinha} are exactly the $\theta_\pi(\pfrak) $ with $\pfrak=\langle p \rangle.$

 We set $$ \pi_L(x) = \# \{ \pfrak : \pfrak \text{ prime ideal in $L$ with } \pfrak\nmid \mathfrak{n}, \Nr(\pfrak)\leq x \}.$$
For an interval $I= [\alpha, \beta] \subseteq [0, \pi]$, the Sato-Tate theorem studies the distribution of 
\begin{align*}
    N_I(\pi, x) := \sum\limits_{\substack{\Nr(\pfrak)\leq x\\ \pfrak \nmid \mathfrak{n} }} \chi_I\left( \theta_\pi(\pfrak)\right)
\end{align*} and from the work of Barnet-Lamb, Gee and Geraghty \cite{BLGG}, it is known that
\begin{equation}\label{Sato-Tate for HMF}
    N_I(\pi, x) \sim \pi_L(x) \mu_{\infty}(I).
\end{equation}
We are interested in studying the statistics of the error term in this theorem. To be precise, let $\phi$ be a complex-valued function defined on $\PKN$. We denote the average 
$$\langle\phi(\pi)\rangle :=\frac{1}{\#\PKN} \sum_{\pi \in \PKN} \phi(\pi).$$ The task then, is to study the behaviour of the moments 
\begin{equation}\label{r-th-moments}
    \langle (N_I(\pi, x) -\pi_L(x) \mu_{\infty}(I))^r \rangle 
\end{equation}for each $r\in \mathbb{N}$.
We prove
\begin{thm}\label{main-thm}
    Consider a family $\PKN$ for fixed squarefree level $\mathfrak{n}$ and even weights $\underline{k} =\underline{k}(x)$ such that  $\frac{\sum_{i=1}^d \log k_i}{\sqrt{x}\log x}\to \infty$ as $x\to\infty$. Fix an interval $I \subseteq [0,\pi]$. Then for any continuous real-valued function $g$ on $\R$, we have
    $$ \lim\limits_{x\to\infty} g \left( \frac{N_I(\pi, x)- \pi_L(x)\mu_\infty(I)}{\sqrt{\pi_L(x) (\mu_\infty(I) - \mu_\infty(I)^2)}}\right) = \frac{1}{\sqrt{2\pi}}\int_{-\infty}^{\infty} g(t)e^{-\frac{t^2}{2}}~dt.$$
\end{thm}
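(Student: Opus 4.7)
The plan is to prove convergence in distribution via the method of moments: it suffices to show that for every positive integer $r$, the $r$-th moment of the normalized error term converges to the $r$-th moment of the standard Gaussian, namely $(r-1)!!$ if $r$ is even and $0$ if $r$ is odd. Since the Gaussian distribution is determined by its moments, this will yield the conclusion. Thus the task reduces to establishing asymptotics for the moments \eqref{r-th-moments}.

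To compute these moments, I would expand the indicator $\chi_I$ (after subtracting its mean $\mu_\infty(I)$) in the orthonormal basis of Chebyshev polynomials $U_m(\cos\theta)$ with respect to the Sato--Tate measure. Writing $f_I(\theta) := \chi_I(\theta) - \mu_\infty(I) = \sum_{m\geq 1} \hat{f}_I(m)\, U_m(\cos\theta)$, we obtain
\[
N_I(\pi,x) - \pi_L(x)\mu_\infty(I) = \sum_{m\geq 1} \hat{f}_I(m)\, X_m(\pi,x), \qquad X_m(\pi,x):=\sum_{\substack{\Nr(\pfrak)\leq x \\ \pfrak\nmid\mathfrak{n}}} U_m(\cos\theta_\pi(\pfrak)).
\]
Since $\chi_I$ is of bounded variation one has $\hat{f}_I(m)\ll 1/m$; to deal with rigorously, I would sandwich $\chi_I$ between Beurling--Selberg majorants and minorants with rapidly decaying Chebyshev coefficients, so that all series manipulations below are absolutely convergent. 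Raising to the $r$-th power expands into sums $\sum \hat{f}_I(m_1)\cdots \hat{f}_I(m_r)\langle X_{m_1}\cdots X_{m_r}\rangle$, and using the product formula $U_m U_n = \sum_{j=0}^{\min(m,n)} U_{m+n-2j}$ (which is multiplicative across distinct prime ideals) one reduces each $\langle X_{m_1}\cdots X_{m_r}\rangle$ to averages of the form $\langle \prod_{i} U_{M_i}(\cos\theta_\pi(\pfrak_i))\rangle$ over tuples of primes.

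The central input is then a quantitative equidistribution statement: there exist constants such that for any finite collection of distinct primes $\pfrak_1,\dots,\pfrak_s$ with $\Nr(\pfrak_i)\leq x$ and any exponents $M_i\geq 0$,
\[
\frac{1}{\#\PKN}\sum_{\pi \in \PKN} \prod_{i=1}^s U_{M_i}(\cos\theta_\pi(\pfrak_i)) \;=\; \prod_{i=1}^s \delta_{M_i,0} \;+\; \text{error},
\]
where the error is controlled in terms of $\Nr(\pfrak_1)^{M_1/2}\cdots\Nr(\pfrak_s)^{M_s/2}$ and becomes negligible under the hypothesis $\sum_i \log k_i/(\sqrt{x}\log x)\to\infty$. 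This should follow from the Eichler--Selberg / Arthur trace formula for Hilbert modular forms, specifically from an explicit form of Shimizu's / Arthur's trace formula applied to the Hecke operators indexed by $\prod \pfrak_i^{M_i}$, where the identity term provides the main contribution proportional to the dimension of the relevant discrete series part, and the elliptic/parabolic contributions are bounded in terms of the weights and Hecke parameters. Granting this, a standard combinatorial analysis identifies the dominant contribution to the $r$-th moment as coming from perfect pairings of the indices $(m_i)$ coupled with diagonal pairings of the primes (so that the product of Chebyshev polynomials contains the constant term $U_0$), which, in view of the identity $\sum_{m\geq 1}\hat{f}_I(m)^2 = \mu_\infty(I)-\mu_\infty(I)^2$ and of $|\{(\pfrak_i)\}|^{r/2}$-many pairings, yields precisely $(r-1)!!\cdot \bigl(\pi_L(x)(\mu_\infty(I)-\mu_\infty(I)^2)\bigr)^{r/2}$ for even $r$ and a vanishing leading term for odd $r$.

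The main obstacle is the trace-formula input in the Hilbert modular setting: one needs a sufficiently explicit estimate for the geometric side so that, uniformly in the number of primes and in the exponents $M_i$ arising when $r$ is fixed but the primes have $\Nr(\pfrak)\leq x$, the error term is dominated by $\pi_L(x)^{r/2}$ after weighting by the Chebyshev coefficients. The quantitative threshold $\sum_i \log k_i \gg \sqrt{x}\log x$ should come out precisely when balancing the size of the geometric side (which grows polynomially in $\Nr(\pfrak)$) against the cardinality $\#\PKN$ (which grows like a product of the $k_i$'s, cf.\ \eqref{size of PKN}). Once this estimate is in hand, uniformity of the error in the length of tuples and bounding the Chebyshev-series tails contributed by the Beurling--Selberg approximation are routine, and the moment computation assembles into the Gaussian moments as above.
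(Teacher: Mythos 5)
Your proposal follows essentially the same route as the paper: method of moments, expansion in Chebyshev polynomials $U_m(\cos\theta)$, Beurling--Selberg majorants/minorants to replace the indicator, the Lau--Li--Wang trace formula for $\langle \prod_i U_{m_i}(\cos\theta_\pi(\pfrak_i))\rangle$, and a pairing/partition combinatorics step that isolates the diagonal contribution $\sum_m \hat{F}^\pm_M(m)^2 \to \mu_\infty(I)-\mu_\infty(I)^2$ as the source of the Gaussian moments. The paper packages the trace-formula main term as $\prod_i \int_0^\pi U_{m_i}(\cos\theta)\,d\mu_{\nu_i}(\theta)$ (Lemma \ref{key-lemma}), which streamlines the case analysis over partitions, but this is a presentational choice rather than a different argument.

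Two small remarks on precision: the trace formula's error grows like $\Nr(\mathfrak{a})^{3/2}=\prod\Nr(\pfrak_i)^{3m_i/2}$, not $\prod\Nr(\pfrak_i)^{m_i/2}$, which is what feeds into the specific threshold $\sum_i\log k_i \gg \sqrt{x}\log x$; and the main term after dividing by $\#\PKN$ is $\delta_{2\mid\underline{m}}\Nr(\mathfrak{a})^{-1/2}$ rather than $\prod_i \delta_{m_i,0}$, though the nonzero even-index contributions sum to only $O(\log\log x)$ over $\Nr(\pfrak)\le x$ and so vanish after normalization, as you implicitly use. You also need (and the paper supplies, via a mean-square convergence step) an argument that passing from $\chi_I$ to the Beurling--Selberg polynomials does not alter the limiting moments, since the $r$-th moment of a signed quantity cannot simply be sandwiched; you flag this as ``routine,'' which is fair.
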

The motivation for the above result comes from ideas in probability theory. For a given interval $I\subseteq [0,\pi]$ and a representation $\pi$ in $\PKN$, consider a prime ideal $\pfrak$ such that  $\pi_\pfrak$ is unramified. Note that since $\pfrak\nmid \mathfrak{n}$, choosing $\pi$ with a $K_0(\mathfrak{n})$ 
fixed vector gives us that the representations $\pi_\pfrak$ are unramified. Let
$$ X_{\pfrak, \pi} = \chi_I\left( \theta_\pi(\pfrak)\right). $$ Consider a non-archimedean valuation $\nu$ corresponding to $\pfrak$ and set
\begin{equation*}
        d\mu_\nu (\theta) = \frac{ \Nr(\pfrak) + 1}{\left( \Nr(\pfrak)^{\frac{1}{2}} + \Nr(\pfrak)^{-\frac{1}{2}}\right)^2 -4\cos^2\theta}d\mu_\infty(\theta),
    \end{equation*}
    where $ d\mu_\infty(\theta) = \frac{2}{\pi} \sin^2\theta ~d\theta.$
    From the work of Li \cite{Li}, we know that $$ \langle X_{\pfrak,\pi} \rangle \sim \mu_\nu(I)$$ if we assume appropriate growth conditions on $\underline{k}$ or $\mathfrak{n}$. More generally, Lau-Li-Wang \cite[Theorem 1.1]{Lau-Li-Wang} prove an effective joint distribution result for these angles. Using their result, we infer that if $I_1,\ldots, I_h$ are each intervals in $[0,\pi]$ and $\pfrak_1, \ldots, \pfrak_h$ are distinct prime ideals that are relatively prime to $\mathfrak{n},$ then
    $$ \langle X_{\pfrak_1,\pi}\cdots X_{\pfrak_h,\pi} \rangle \sim \prod_{j=1}^T \mu_{\nu_j}(I_j)$$ under suitable growth conditions on $\underline{k}$. Here, $\nu_j$ is the valuation corresponding to $\pfrak_j$. Thus, viewed as random variables, the $X_{\pfrak,\pi}$ have distributions that depend on $\pfrak$, but behave like independent random variables for large enough $\underline{k}$.
    
    It is then natural to investigate the distribution of the sum of random variables $\sum_{\Nr(\pfrak)\leq x} X_{\pfrak,\pi}$. Firstly, we view $\mu_\nu(I)$ to be $\mbox{E}[X_{\pfrak,\pi}]$. Then a guess for the variance would be $$ \text{Var}[X_{\pfrak,\pi}]= \mbox{E}[X_{\pfrak,\pi}^2]- \mbox{E}[X_{\pfrak,\pi}]^2= \mu_\nu(I) - \mu_\nu(I)^2.$$
 However, keeping \eqref{Sato-Tate for HMF} in mind, and observing that $d\mu_{\nu}(\theta) \to d\mu_{\infty}(\theta)$ as $\Nr(\pfrak)\to\infty$, the quantity  $\pi_L(x)\mu_\infty(I)$ is a reasonable expression for the expectation of the sum of random variables $\sum_{\Nr(\pfrak)\leq x} X_{\pfrak,\pi}$ for $x$ large enough. Similarly, the variance of the sum of random variables is heuristically $ \pi_L(x) (\mu_\infty(I)-\mu_\infty(I)^2)$ as $x\to\infty$. With these ideas in mind, we are motivated to find necessary conditions for a central limit theorem to hold. This also adds to the literature on central limit theorems for Hecke eigenvalues in various settings, for example \cite{Nagoshi}, \cite{Murty-Prabhu}, \cite{lau-Ng-Wang} and \cite{Fugleberg-Walji}. 

The proof of Theorem \ref{main-thm} adapts the underlying technique used in the proof of \cite[Theorem 1.1]{Prabhu-Sinha}, namely the method of moments applied to quantities approximating the moments \eqref{r-th-moments}, and the Eichler-Selberg trace formula. However, the proof of Theorem \ref{main-thm} is cleaner, and is achieved by two processes. First, we expand the approximating functions using a basis of Chebyshev polynomials. Secondly, we simplify the proof of the theorem pertaining to the calculation of higher moments. This proof is adapted from \cite{Sun-Wen-Zhang}, who worked out a simplification of the theorem on higher moments in \cite{Prabhu-Sinha}. This is achieved by expressing the main term of the analogous trace formula, given in \eqref{trace-formula-LLW}, as an integral involving Chebyshev polynomials (see Lemma \ref{alternate version of trace formula}). 

The structure of the paper is as follows. Section \ref{Preliminaries} covers the properties of Chebyshev polynomials and Beurling-Selberg polynomials needed to follow the proof of the theorem. Section \ref{first-moment-section} gives the details of the first-moment calculation, culminating in the average Sato-Tate result for Hilbert Modular Forms. After explaining the outline of the proof of Theorem \ref{main-thm} in Section \ref{Outline}, we give the details of the simplified proof of higher moments in Section \ref{higher-moments}. We conclude the article by stating a smooth version of Theorem \ref{higher-moments-theorem} that should hold, resulting in a smooth version of the central limit theorem with weaker growth conditions on $\underline{k}=\underline{k}(x)$. 

{\bf Acknowledgements} The first named author is supported by a Ph.D. fellowship from CSIR. The second named author's research is supported by a DST-INSPIRE Faculty Fellowship from the Government of India and acknowledges Savitribai Phule Pune University, where she was working when this work began. The authors are grateful to Baskar Balasubramanyam and Kaneenika Sinha for their valuable comments on a previous version of this article.

\section{Preliminaries}\label{Preliminaries}
\subsection{Chebyshev polynomials}

    The Chebyshev polynomials of the second kind, denoted by $U_n$, are defined by the recurrence relations
\begin{align*}
U_0(x)&=1\\
U_1(x)&=2x\\
U_{n}(x)&=2x\,U_{n-1}(x)-U_{n-2}(x) \qquad \text{for } n\geq 2.
\end{align*}
These polynomials form an orthonormal basis with respect to the Sato-Tate measure. More explicitly, the following holds. For non-negative integers $m ,n $
\begin{align}\label{orthogonality-rel'ns}
    \frac{2}{\pi} \int_0^\pi U_m(\cos\theta)U_n(\cos\theta) \sin^2\theta ~d\theta = \begin{cases}
        1 & \text{ if } m=n\\
        0 &\text{ otherwise.}
    \end{cases}
\end{align}
The following recursive relations also hold. For non-negative integers $m$ and $n$ with $m\geq n$,
\begin{equation}\label{recursive-rel'ns}
    U_m(x)U_n(x) = \sum\limits_{k=0}^n U_{m+n-2k}(x).
\end{equation}
\subsection{A trace formula}\label{sec-trace formula}
    We state the version of Arthur's trace formula proved in \cite{Lau-Li-Wang} that we will frequently use. This also appears in \cite{BB-KS}. 
    \begin{proposition}[\cite{BB-KS}, Proposition 18]\label{trace-formula}
        Let $\pfrak_1,\ldots, \pfrak_h$ be distinct primes coprime to squarefree $\mathfrak{n}$. Let $\underline{m}=(m_1,\ldots,m_h)$ be a tuple of non-negative integers, and $\mathfrak{a} = \pfrak_1^{m_1}\cdots \pfrak_h^{m_h}$. Then 

\begin{equation}\label{trace-formula-LLW}
    \sum_{\pi \in \PKN} \prod_{i=1}^h U_{m_i}(\cos\theta_\pi(\pfrak_i)) = C_L\Nr(\mathfrak{n})\delta_{2\mid \underline{m}} \prod_{i=1}^d \frac{k_i-1}{4\pi} \Nr(\mathfrak{a})^{-\frac{1}{2}} + O\left(\Nr(\mathfrak{n})^\epsilon\Nr(\mathfrak{a})^{\frac{3}{2}} \right).
\end{equation}
Here, 
\begin{itemize}
    \item $C_L$ is a constant that depends only on the number field $L$.
    \item The quantity $\delta_{2\mid \underline{m}}$ is equal to one if all the $m_i$ are even, and zero otherwise.
\end{itemize}
    \end{proposition}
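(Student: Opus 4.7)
The plan is to derive this identity from the Arthur--Selberg trace formula (equivalently, from a Jacquet--Langlands transfer to a compact inner form followed by the Selberg trace formula on the compact quotient) applied to a test function on $G(\mathbb{A})$ designed to isolate the family $\PKN$ and insert the desired Hecke eigenvalues. Concretely, I would take $f = f_\infty \otimes f_f$ where $f_\infty$ is a pseudo-coefficient of the discrete series $\otimes_{i=1}^d D_{k_i-1}$ (so its trace on an irreducible admissible representation of $G_\infty$ is $1$ on this representation and $0$ on the rest of the tempered spectrum), and $f_f$ is the normalized Hecke kernel $\mathbf{1}_{K_0(\mathfrak{n})}/\mathrm{vol}(K_0(\mathfrak{n}))$ convolved with the Hecke operators $T_{\mathfrak{p}_i^{m_i}}$. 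On the spectral side, the trace of $f$ on $\pi \in \PKN$ equals $\prod_i \lambda_\pi(\mathfrak{p}_i^{m_i})\,\Nr(\mathfrak{p}_i^{m_i})^{-1/2}$, which by the Satake correspondence (the characters of symmetric powers of $\mathrm{SU}(2)$ are precisely the Chebyshev polynomials of the second kind) becomes $\Nr(\mathfrak{a})^{-1/2}\prod_i U_{m_i}(\cos\theta_\pi(\mathfrak{p}_i))$ — up to the common factor $\Nr(\mathfrak{a})^{-1/2}$ this is the left-hand side of \eqref{trace-formula-LLW}.

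Next I would compute the geometric side. The central contribution comes from the identity class: it is essentially $\mathrm{vol}(G(\mathbb{Q})Z\backslash G(\mathbb{A}))$ times $f(1)$. The archimedean part contributes the product of formal degrees $\prod_{i=1}^d \frac{k_i-1}{4\pi}$ of the discrete series, the level part contributes $C_L \Nr(\mathfrak{n})$ (with $C_L$ collecting the volume of $L^\times\backslash\mathbb{A}_L^{(1)}$ and similar number-field invariants), and the Hecke part contributes $\Nr(\mathfrak{a})^{-1/2}$. The parity indicator $\delta_{2\mid\underline{m}}$ emerges here: the normalized Hecke operator $T_{\mathfrak{p}^{m}}$ has support meeting the center of $\mathrm{GL}_2(L_\mathfrak{p})$ only when $m$ is even, and because $\pi$ has trivial central character the odd $m_i$ kill the identity orbital integral.

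The remaining conjugacy classes (elliptic, and in the non-compact realization hyperbolic and unipotent) must be absorbed into the error term. Here one bounds orbital integrals factor by factor: archimedean orbital integrals against pseudo-coefficients of discrete series are uniformly bounded in $\underline{k}$ (by Arthur's formulas for stable discrete series characters), the $K_0(\mathfrak{n})$-part gives at most $\Nr(\mathfrak{n})^\epsilon$ by a divisor-type estimate since $\mathfrak{n}$ is squarefree, and the Hecke part gives at most $\Nr(\mathfrak{a})^{3/2}$ because the number of relevant conjugacy classes in the support of the normalized $T_\mathfrak{a}$ grows polynomially in $\Nr(\mathfrak{a})$ while the orbital integral of each is bounded by $\Nr(\mathfrak{a})^{1/2}$ after normalization. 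Multiplying yields the claimed bound $O(\Nr(\mathfrak{n})^\epsilon \Nr(\mathfrak{a})^{3/2})$.

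The hardest step will be the bookkeeping on the geometric side: in the non-compact setting one must either invoke Jacquet--Langlands to move to a totally definite quaternion algebra (replacing $G$ by a group with compact adelic quotient, which is clean for Hilbert modular forms of cohomological weight $k_i \geq 2$) and then carefully track the level factor $\Nr(\mathfrak{n})$, or else work directly with Arthur's truncated trace formula and show that the parabolic and Eisenstein contributions also satisfy the $O(\Nr(\mathfrak{n})^\epsilon \Nr(\mathfrak{a})^{3/2})$ bound. Ensuring the exponent $3/2$ rather than $2$ on $\Nr(\mathfrak{a})$, which is what makes this trace formula sharp enough to support the subsequent central limit analysis, requires care with the normalization of the Hecke operators and with the geometric count of elliptic classes — this is precisely the content of the Lau--Li--Wang estimate being invoked.
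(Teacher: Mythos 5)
The paper does not prove this proposition; it quotes it directly from Lau--Li--Wang and from \cite{BB-KS}, so there is no in-paper argument to compare against. That said, your sketch correctly identifies the standard route used in those references: apply the Arthur--Selberg trace formula (or Jacquet--Langlands to a totally definite quaternion algebra and then the Selberg trace formula on a compact quotient) with a test function built from a pseudo-coefficient of $\otimes_i D_{k_i-1}$ at infinity and the normalized Hecke kernel at finite places, so that the spectral side isolates $\Pi_{\underline{k}}(\mathfrak{n})$ and produces $\prod_i U_{m_i}(\cos\theta_\pi(\pfrak_i))$ via the Satake/Chebyshev dictionary, while the identity (central) orbital integral supplies the main term $C_L\Nr(\mathfrak{n})\prod_i\frac{k_i-1}{4\pi}\Nr(\mathfrak{a})^{-1/2}\delta_{2\mid\underline{m}}$ and the remaining geometric terms go into the error. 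Your explanation of the parity factor $\delta_{2\mid\underline{m}}$ as coming from whether the Hecke support meets the center is also correct.

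Two caveats. First, a normalization muddle: if $f_f$ is the \emph{normalized} Hecke kernel, then the spectral trace is already $\prod_i U_{m_i}(\cos\theta_\pi(\pfrak_i))$ with no leftover $\Nr(\mathfrak{a})^{-1/2}$; the factor $\Nr(\mathfrak{a})^{-1/2}$ belongs only to the identity contribution on the geometric side, not ``up to a common factor'' on both sides. Second, the quantitative heart of the proposition — the uniform-in-$\underline{k}$ bound on archimedean orbital integrals against pseudo-coefficients, the $\Nr(\mathfrak{n})^\epsilon$ divisor-type estimate using squarefreeness, and especially the exponent $\frac{3}{2}$ on $\Nr(\mathfrak{a})$ — is named but not derived; you yourself flag this as the hardest step. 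As written this is a plausible and essentially correct road map to the cited result rather than a self-contained proof of it, which is consistent with the paper's decision simply to cite Lau--Li--Wang and \cite{BB-KS} for the statement.
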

    As an immediate consequence, we have 
    \begin{equation}\label{size of PKN}
        \# \PKN = C_L\Nr(\mathfrak{n})\prod_{i=1}^d \frac{k_i-1}{4\pi} + O\left(\Nr(\mathfrak{n})^\epsilon \right).
    \end{equation}
    \begin{remark}
        For simplicity, we stick to squarefree level $\mathfrak{n}$ keeping Proposition \ref{trace-formula} in mind. A similar result may be obtained by taking $\mathfrak{n}$ not necessarily squarefree, using [\cite{Lau-Li-Wang}, Theorem 6.3].  
    \end{remark}
We also state well-known results on sums of powers of prime ideal norms that will be useful. 
 \begin{lemma}\label{NP>=1}
     We have 
     \begin{equation}\label{sumnp1}
   \sum_{\Nr(\pfrak)\leq x} \frac{1}{\Nr(\pfrak)}=\log\log x+O_L(1) 
\end{equation} and 
     \begin{equation}\label{sumnp>1}
     \sum_{r\geq 2} \sum_{\Nr(\pfrak)\leq x} \frac{1}{\Nr(\pfrak)^r}=O_L(1).
  \end{equation}
 \end{lemma}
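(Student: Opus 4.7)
The plan for \eqref{sumnp1} is to combine Landau's prime ideal theorem with partial summation. Landau's theorem furnishes the asymptotic $\pi_L(x) = \Li(x) + O_L(x\exp(-c\sqrt{\log x}))$ for some $c=c(L)>0$, which in particular yields $\pi_L(t) = t/\log t + O_L(t/\log^2 t)$. Writing the target sum as a Stieltjes integral against $d\pi_L$ and integrating by parts gives
$$\sum_{\Nr(\pfrak)\leq x}\frac{1}{\Nr(\pfrak)} = \frac{\pi_L(x)}{x} + \int_{2}^{x}\frac{\pi_L(t)}{t^2}\,dt.$$
The boundary term is $O_L(1/\log x)$. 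Substituting the asymptotic into the integrand and invoking the standard estimates $\int_2^x (t\log t)^{-1}\,dt = \log\log x + O(1)$ and $\int_2^x (t\log^2 t)^{-1}\,dt = O(1)$ then yields the claimed $\log\log x + O_L(1)$.

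For \eqref{sumnp>1} the plan is to give a pointwise bound uniform in $x$, avoiding any analytic input. Since $\Nr(\pfrak)\geq 2$ for every prime ideal, a geometric series yields
$$\sum_{r\geq 2}\frac{1}{\Nr(\pfrak)^r} = \frac{1}{\Nr(\pfrak)^2}\cdot\frac{1}{1-\Nr(\pfrak)^{-1}} \leq \frac{2}{\Nr(\pfrak)^2}.$$
At most $d=[L:\Q]$ prime ideals of $\mathcal{O}$ lie above any given rational prime $p$, and each such $\pfrak$ has $\Nr(\pfrak)\geq p$; hence
$$\sum_{\pfrak}\frac{1}{\Nr(\pfrak)^2} \leq d\sum_{p}\frac{1}{p^2} \leq d\sum_{n\geq 2}\frac{1}{n^2} = O_L(1).$$
Combining the two displays (and noting that truncating at $\Nr(\pfrak)\leq x$ only decreases the sum) establishes \eqref{sumnp>1}.

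The only nontrivial input is Landau's prime ideal theorem, which is classical; beyond that the argument reduces to a partial summation and a geometric series estimate, so there is no real obstacle. In fact the full strength of Landau is not needed: any quantitative version of the form $\pi_L(x) = x/\log x + O_L(x/\log^2 x)$ would suffice to conclude \eqref{sumnp1} via the same partial summation.
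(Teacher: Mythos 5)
Your proof is correct, and it takes a genuinely different route from the paper's on both counts. For \eqref{sumnp1} the paper simply cites Lemma~2.4 of the reference \cite{MR}; you reprove it from scratch via Landau's prime ideal theorem and partial summation, which is a valid (if heavier) self-contained argument, and your closing remark that the weaker bound $\pi_L(x) = x/\log x + O_L(x/\log^2 x)$ already suffices is accurate. For \eqref{sumnp>1} the paper passes to a sum over rational primes using the ideal-counting function $a_l$, invokes the bound $a_l \le \tau(l)^{d-1}$, and then performs a geometric series over $p$ after summing over the possible residue degrees $t=1,\dots,d$. Your argument is cleaner: you first collapse the sum over $r$ by the geometric series $\sum_{r\ge 2}\Nr(\pfrak)^{-r} \le 2\Nr(\pfrak)^{-2}$, and then use only the elementary facts that at most $d$ prime ideals lie above any rational prime $p$ and that each such $\pfrak$ satisfies $\Nr(\pfrak) \ge p$, reducing everything to $\sum_p p^{-2}$. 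This sidesteps the divisor-bound machinery entirely and gives a slightly sharper and more transparent proof of the same $O_L(1)$ estimate. Both approaches are valid; yours buys elementarity, while the paper's fits the counting-function framework already in use there.
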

 \begin{proof}
 Equation \eqref{sumnp1} follows from Lemma 2.4 of \cite{MR}. For \eqref{sumnp>1}, recall that for a prime ideal $\mathfrak{p},$ the value of $\Nr(\pfrak)=p^t$ where $\langle p\rangle =\pfrak\cap \Z$ and $1\leq t\leq d. $  Let the ideal counting function be given by $$a_{l}=|\{   \mathfrak{m} \subset \mathcal{O}  : \mathfrak{m}  \,\text{ideal with } \Nr(\mathfrak{m})=l. \}|.$$
      Then, it is known that $a_l\leq \tau(l)^{d-1}$ (see \cite[equation (68)]{CN}), where $\tau$ is the usual divisor function.
    
Therefore,
\begin{align*}
 \sum_{r\geq 2} \sum_{\Nr(\pfrak)\leq x} \frac{1}{\Nr(\pfrak)^r}&=\sum_{r\geq 2}\sum_{p\leq x}\sum_{t=1}^d \frac{a_{p^t}}{p^{tr}}\leq \sum_{r\geq 2}\sum_{p\leq x}\sum_{t=1}^{d} \frac{(t+1)^{d-1}}{p^{tr}}
 \\
 &\leq \sum_{r\geq 2}\sum_{p\leq x} \frac{d(d+1)^{d-1}}{p^{r}}
 \ll_L \sum_{r\geq 2}\sum_{p\leq x} \frac{1}{p^{r}}=O_L(1).
\end{align*}
 \end{proof}    

\subsection{Beurling-Selberg polynomials}
The main underlying technique used in this article is to approximate the characteristic function by appropriate trigonometric polynomials of finite degree. This technique has been used widely to obtain fluctuations in error terms in equidistribution theorems, following the work of \cite{Faifman-Rudnick}. Here, we state the properties that we need, and refer the reader to a detailed exposition in \cite[Chapter 1]{Montgomery}. 
Let $J=[\alpha,\beta]\subseteq [-\frac{1}{2}, \frac{1}{2}]$ and $M\geq 1$ be an integer. There exist trigonometric polynomials $S_{J,M}^+(x)$ and $S_{J,M}^-(x)$ of degree not exceeding $M$ such that for all $x\in \R$, $$S_{J,M}^-(x) \leq \chi_J(x) \leq S_{J,M}^+(x),$$ where $\chi_J$ is the usual indicator function of the interval $J$. While the explicit definition of these polynomials can be found in \cite{Montgomery}, we will heavily use properties of the coefficients in their Fourier expansions for our calculations. Before we describe these properties, we fix some notation.
Let $e(t):= e^{2\pi it}.$ Then, the Fourier expansion of $\chi_I$ is given by 
$$ \chi_J(x) = \sum_{n\in \Z} \hat{\chi}_J(n)e(nx), \qquad \qquad \hat{\chi}_J(n) = \int_J e(-nt)~dt.$$
Note that 
\begin{equation}\label{chi-fourier coeff}
    \hat{\chi}_J(0) = \beta-\alpha, \qquad \hat{\chi}_J(n)= \frac{e(-n\alpha)-e(-n\beta)}{2\pi i n} \quad \text{ for } |n|\geq 1.
\end{equation}
The Beurling-Selberg polynomials $S_{J,M}^+$ and $S_{J,M}^-$ are good approximations of the indicator function and this is evident from the properties of its Fourier coefficients. For $0\leq |m| \leq M$, 
\begin{equation}\label{S-hat}
    \hat{S}_{J,M}^{\pm}(m) =   \hat{\chi}_J(m) + O\left( \frac{1}{M+1}\right)
\end{equation} and $\hat{S}_{J,M}^{\pm}(m) =0$ for $|m|>M$. Since we are interested in finer statistics of the sequence of angles $\{ \theta_\pi(\pfrak)\}$, which are equidistributed with respect to the measure $\frac{2}{\pi} \sin^2\theta$, it is useful to consider Fourier expansions of the polynomials using the orthonormal basis of Chebyshev polynomials $U_n(\cos\theta)$. The following lemma accomplishes this.

	\begin{lemma}[{\cite[Lemma 1.3]{RT2017}}] \label{lemma-RT}
		Let $I = [a, b] \subseteq[0,\pi]$, and let $M$ be a positive integer. There exist trigonometric polynomials $$F^{\pm}_{I,M}(\theta) = \sum\limits_{m=0}^M \hat{F}^{\pm}_{I,M}(n)U_m(\cos\theta)$$ such that for $0\leq \theta \leq \pi$, we have $$F^{-}_{I,M}(\theta)\leq \chi_I(\theta) \leq F^{+}_{I,M}(\theta). $$
			
	\end{lemma}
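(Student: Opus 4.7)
The plan is to reduce the construction to the classical Beurling--Selberg majorant/minorant problem on the circle $\R/2\pi\Z$ and then convert the resulting trigonometric polynomials into expansions in the Chebyshev--$U$ basis by means of the identity $U_m(\cos\theta) = \sin((m+1)\theta)/\sin\theta$. The first observation I would record is that every sine polynomial of the form $p(\theta) = \sum_{m=1}^{M+1} c_m \sin(m\theta)$ factors as $p(\theta) = \sin\theta \cdot q(\cos\theta)$ with $q(\cos\theta) = \sum_{k=0}^{M} c_{k+1} U_k(\cos\theta)$. Consequently, producing $F^{\pm}_{I,M}(\theta) = \sum_{m=0}^{M} \hat F^{\pm}_{I,M}(m)\, U_m(\cos\theta)$ with $F^- \leq \chi_I \leq F^+$ on $[0,\pi]$ is equivalent, after multiplying through by the nonnegative factor $\sin\theta$, to producing odd sine polynomials of degree $\leq M+1$ that sandwich the continuous function $h(\theta) := \chi_I(\theta)\sin\theta$ on $(0,\pi)$.

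Next I would extend $h$ to an odd, $2\pi$-periodic function on the line; this extension is supported on $\pm I \pmod{2\pi\Z}$ and vanishes at the wrap-around points, so no discontinuity is introduced at $0$ or $\pi$. To this periodic function I would apply the classical Beurling--Selberg construction (or Vaaler's variant) on the circle, obtaining trigonometric polynomials of degree at most $M+1$ that bracket the odd extension with $L^1$-deviation of order $1/(M+1)$. By symmetrising $T^{\pm}(\theta) \mapsto \tfrac{1}{2}(T^{\pm}(\theta) - T^{\pm}(-\theta))$ I can arrange that the majorant and minorant are themselves odd sine polynomials, hence divisible by $\sin\theta$ as in the first step. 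Dividing through and reindexing produces $F^{\pm}_{I,M}$ of the desired shape, and the standard Fourier-coefficient bounds \eqref{S-hat} for the Beurling--Selberg polynomials translate directly into bounds $\hat F^{\pm}_{I,M}(m) = \hat F_I(m) + O(1/M)$ via the orthogonality relations \eqref{orthogonality-rel'ns}, where $\hat F_I(m)$ denotes the true Chebyshev--$U$ Fourier coefficient $\tfrac{2}{\pi}\int_0^\pi \chi_I(\theta) U_m(\cos\theta) \sin^2\theta\, d\theta$.

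The main technical subtlety I anticipate is the behaviour at the endpoints $\theta = 0$ and $\theta = \pi$, where dividing by $\sin\theta$ is delicate: the inequality $p^{\pm}(\theta) \gtrless h(\theta)$ is strongest on $(0,\pi)$ and must be extended by continuity to the endpoints of $[0,\pi]$. This is harmless so long as $0,\pi \notin I$, because the resulting Chebyshev polynomial is continuous and its value at the endpoints is automatically nonnegative for a Selberg majorant; however, if an endpoint of $I$ coincides with $0$ or $\pi$ one must check the inequality separately, possibly after a small shift of the interval absorbed into the $O(1/M)$ error. An alternative, cleaner route is to invoke the construction of \cite{RT2017} directly and verify only the coefficient estimates in the $U_m$-expansion, which is what I would ultimately do in a final writeup.
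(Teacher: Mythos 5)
Your approach is genuinely different from the one in the paper (and in \cite{RT2017}): rather than working directly with cosine polynomials and even symmetrisation, you multiply through by $\sin\theta$, reduce to one-sided approximation of $h(\theta)=\chi_I(\theta)\sin\theta$ by sine polynomials, and then try to recover the $U_m$-expansion via $U_m(\cos\theta)=\sin((m+1)\theta)/\sin\theta$. The reduction itself is valid (a cosine polynomial $\sum_{m\le M}c_m U_m(\cos\theta)$ majorizes $\chi_I$ on $[0,\pi]$ if and only if the sine polynomial $\sum c_m\sin((m+1)\theta)$ majorizes $h$ on $[0,\pi]$, since $\sin\theta\ge0$ there), and it is a nice reformulation.

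However, the step where you produce the required sine polynomials has a genuine gap. Because $h$ is odd and a sine polynomial $p$ is odd, any $p$ with $p\ge h$ on $(0,\pi)$ automatically satisfies $p\le h$ on $(-\pi,0)$. So the object you need is \emph{not} a circle majorant of the odd extension of $h$: no odd trigonometric polynomial can majorize a genuinely odd function on the whole circle, since it would then also minorize it. Consequently, your antisymmetrisation $T^{\pm}(\theta)\mapsto\tfrac12\bigl(T^{\pm}(\theta)-T^{\pm}(-\theta)\bigr)$ of a Beurling--Selberg majorant does not preserve the majorant property on $(0,\pi)$: from $T^{+}(\theta)\ge h(\theta)$ and $T^{+}(-\theta)\ge h(-\theta)=-h(\theta)$ you obtain $-T^{+}(-\theta)\le h(\theta)$, so the average of $T^{+}(\theta)$ and $-T^{+}(-\theta)$ is pinched between an upper and a lower bound for $h(\theta)$ and need not lie above it. (The same obstruction kills the variant $\tfrac12\bigl(S^{+}(\theta)-S^{-}(-\theta)\bigr)$: it majorizes $h$ but is not odd, and its odd part is again not controlled.) A secondary issue is that the classical Beurling--Selberg/Vaaler construction is tailored to indicator functions (or bounded-variation functions via the sawtooth), so majorizing $\chi_I(\theta)\sin\theta$ directly on the circle would itself need justification.

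The construction in the paper (and in \cite{RT2017}) sidesteps all of this by staying on the even side: it sets $J=[a/2\pi,\,b/2\pi]$, takes the ordinary Beurling--Selberg polynomials $S^{\pm}_{J,M}$, and forms the \emph{even} symmetrisation $F^{\pm}_{I,M}(\theta)=S^{\pm}_{J,M}(\theta/2\pi)+S^{\pm}_{J,M}(-\theta/2\pi)$. Since the target $\chi_I(\theta)$, viewed on the circle through $\theta\mapsto\theta/2\pi$, is the indicator of the symmetric set $J\cup(-J)$ (an even function), even symmetrisation preserves the majorant and minorant inequalities: $S^{\pm}(x)\gtrless\chi_J(x)$ and $S^{\pm}(-x)\gtrless\chi_J(-x)$ add to the desired inequality. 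The resulting $F^{\pm}_{I,M}$ is a cosine polynomial of degree $\le M$, which lives in the span of $U_0,\dots,U_M$ via the telescoping identity $2\cos(m\theta)=U_m(\cos\theta)-U_{m-2}(\cos\theta)$, yielding exactly the coefficient relation $\hat F^{\pm}_{I,M}(m)=\hat{\mathcal S}^{\pm}_{J,M}(m)-\hat{\mathcal S}^{\pm}_{J,M}(m+2)$ stated in Section~\ref{Preliminaries}. If you want to keep your sine-polynomial reformulation, you would have to construct the required ``majorant-on-$(0,\pi)$, minorant-on-$(-\pi,0)$'' sine polynomials by a method other than antisymmetrising a circle majorant; the simplest such method is to multiply the paper's even $F^{\pm}_{I,M}$ by $\sin\theta$, which makes the detour redundant.
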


The key idea in the proof is to take $\alpha= \frac{a}{2\pi}$ and $\beta= \frac{b}{2\pi}$ so that we now have an interval in $[-\frac{1}{2}, \frac{1}{2}]$, and study the properties of $$F^{\pm}_{I,M}(\theta)= S^{\pm}_{J, M}\left(\frac{\theta}{2\pi} \right) + S^{\pm}_{J, M}\left(-\frac{\theta}{2\pi} \right).$$ 
The relation between the Fourier coefficients $\hat{S}_{J,M}^\pm(m)$ and $\hat{F}_{I,M}^\pm(m)$ is the following. For $0\leq m\leq M$, let 
\begin{equation}\label{mathcal-S-defn}
    \hat{\mathcal{S}}_{J,M}^\pm(m) = \hat{S}_{J,M}^\pm(m) + \hat{S}_{J,M}^\pm(-m).
\end{equation} Then
\begin{align}
    \hat{F}_{I,M}^\pm(m)&= \hat{\mathcal{S}}_{J,M}^\pm(m) -\hat{\mathcal{S}}_{J,M}^\pm(m+2).
\end{align}
Moreover, using \eqref{chi-fourier coeff} and \eqref{S-hat} in \eqref{mathcal-S-defn}, we get that for $0<m\leq M$,
\begin{align}\label{mathcal-S-explicit}
    \hat{\mathcal{S}}_{J,M}^\pm(m) &= \frac{ \sin(2\pi m\beta)-\sin(2\pi m\alpha)}{m\pi} +O\left(\frac{1}{M+1}\right)\\
  \notag  &= \frac{\sin(mb)-\sin(ma)}{m\pi} +O\left(\frac{1}{M+1}\right), \quad \text{ and }\\
    \hat{\mathcal{S}}_{J,M}^\pm(0) &= 2(\beta-\alpha) = \frac{a}{\pi} -\frac{b}{\pi}.
\end{align}
Henceforth, to simplify notation we will fix the interval $I= [a, b]$ throughout the rest of the article, and write $F_M^{\pm}$ and $\hat{F}_M^{\pm}$ without mentioning $I$ in the subscript. 

We now record an important result about the Fourier coefficients $\hat{F}_{M}^\pm(m)$.
\begin{proposition}
    \begin{align}
      \label{expected-value1}  \hat{F}_M^{\pm}(0) &= \mu_\infty(I) + O\left( \frac{1}{M+1} \right).\\
       \label{variance} \sum_{m=1}^M \hat{F}_M^{\pm}(m)^2  &=  \mu_\infty(I) - \mu_\infty(I)^2 + O\left(\frac{\log M}{M} \right).
    \end{align}
\end{proposition}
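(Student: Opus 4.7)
My plan is to compute both quantities by direct Fourier-coefficient comparison with $\chi_I$ in the Chebyshev basis. Using the identity $U_m(\cos\theta)\sin\theta=\sin((m+1)\theta)$ together with the product-to-sum formula, the $m$th Chebyshev coefficient of $\chi_I$ with respect to $d\mu_\infty$ is
\begin{equation*}
\alpha_m := \frac{2}{\pi}\int_a^b U_m(\cos\theta)\sin^2\theta\,d\theta = \frac{1}{\pi}\int_a^b\bigl[\cos(m\theta)-\cos((m+2)\theta)\bigr]d\theta,
\end{equation*}
which evaluates to $\alpha_m = c_m - c_{m+2}$, where I set $c_0:=(b-a)/\pi$ and $c_m:=(\sin(mb)-\sin(ma))/(m\pi)$ for $m\geq 1$. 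In particular $\alpha_0=\mu_\infty(I)$ and $|\alpha_m|\ll 1/m$ for $m\geq 1$.

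For \eqref{expected-value1} I substitute $m=0$ into $\hat{F}_M^\pm(m)=\hat{\mathcal{S}}_{J,M}^\pm(m)-\hat{\mathcal{S}}_{J,M}^\pm(m+2)$ and apply \eqref{mathcal-S-explicit}; this yields $\hat{F}_M^\pm(0)=c_0-c_2+O(1/(M+1))=\alpha_0+O(1/(M+1))=\mu_\infty(I)+O(1/(M+1))$.

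For \eqref{variance} the same substitution gives $\hat{F}_M^\pm(m)=\alpha_m+O(1/(M+1))$ for $1\leq m\leq M-2$, while for $m\in\{M-1,M\}$ the term $\hat{\mathcal{S}}_{J,M}^\pm(m+2)$ vanishes for degree reasons, producing $\hat{F}_M^\pm(m)=c_m+O(1/(M+1))=O(1/M)$. By Parseval applied in the orthonormal Chebyshev basis \eqref{orthogonality-rel'ns},
\begin{equation*}
\sum_{m=0}^\infty \alpha_m^2 \;=\; \int_0^\pi \chi_I(\theta)^2\,d\mu_\infty(\theta) \;=\; \mu_\infty(I),
\end{equation*}
and the decay $|\alpha_m|\ll 1/m$ bounds the tail $\sum_{m>M}\alpha_m^2=O(1/M)$, so $\sum_{m=1}^M\alpha_m^2=\mu_\infty(I)-\mu_\infty(I)^2+O(1/M)$. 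To replace $\alpha_m$ by $\hat{F}_M^\pm(m)$, I factor
\begin{equation*}
\hat{F}_M^\pm(m)^2-\alpha_m^2 = \bigl(\hat{F}_M^\pm(m)-\alpha_m\bigr)\bigl(\hat{F}_M^\pm(m)+\alpha_m\bigr) \;\ll\; \frac{1}{M}\left(\frac{1}{m}+\frac{1}{M}\right),
\end{equation*}
so that summing over $1\leq m\leq M$ contributes $O((\log M)/M)$ from the harmonic cross term plus $O(1/M)$ from the square term, yielding \eqref{variance}.

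I do not expect any serious obstacle beyond careful bookkeeping of error terms; the logarithm in the variance bound is essentially forced by the $\sum 1/m$ appearing in the cross term, and the boundary cases $m=M-1,M$ contribute only $O(1/M^2)$. One might alternatively try to invoke Parseval directly on $F_M^\pm$ (via $\sum_{m=0}^M\hat{F}_M^\pm(m)^2=\int(F_M^\pm)^2\,d\mu_\infty$), but comparing $\int(F_M^\pm)^2\,d\mu_\infty$ with $\mu_\infty(I)$ would require a uniform $L^\infty$ bound on $F_M^\pm$ that is less transparent than the direct coefficient estimate above.
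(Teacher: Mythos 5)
Your proof is correct. For \eqref{expected-value1}, your argument coincides with the paper's own: set $m=0$, use the explicit expressions for $\hat{\mathcal{S}}_{J,M}^\pm(0)$ and $\hat{\mathcal{S}}_{J,M}^\pm(2)$, and recognize $\frac{2}{\pi}\int_a^b\sin^2\theta\,d\theta$. For \eqref{variance}, the paper does not give a proof at all --- it defers to an external reference (a thesis proposition) --- so your self-contained argument is genuine added content, and it is sound. Your route (identify $\alpha_m=c_m-c_{m+2}$ as the exact Chebyshev coefficient of $\chi_I$, invoke Parseval in the orthonormal basis $\{U_m(\cos\theta)\}$ to get $\sum_{m\ge 0}\alpha_m^2=\mu_\infty(I)$, bound the tail via $\alpha_m\ll 1/m$, then transfer to $\hat F_M^\pm(m)$ by the difference-of-squares factorization) is clean and correctly produces the $\log M$ from the $\sum_{m\le M}\frac{1}{Mm}$ cross term. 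The only points to spell out in a polished write-up are the ones you already flag: $\hat{\mathcal{S}}_{J,M}^\pm(m+2)=0$ for $m\in\{M-1,M\}$ because $S_{J,M}^\pm$ has degree at most $M$, giving $\hat F_M^\pm(m)\ll 1/M$ there so the two boundary terms contribute $O(1/M^2)$; and the estimate $\hat F_M^\pm(m)+\alpha_m\ll 1/m+1/M$ for $1\le m\le M-2$, which with $\hat F_M^\pm(m)-\alpha_m\ll 1/(M+1)$ justifies the factored bound. No gaps.
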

\begin{proof}
    Equation \eqref{expected-value1} follows easily from noting that 
    \begin{align*}
        \hat{F}_M^{\pm}(0) &= \hat{\mathcal{S}}_{J,M}^\pm(0) -\hat{\mathcal{S}}_{J,M}^\pm(2)\\
        &= \frac{a}{\pi} -\frac{b}{\pi} - \frac{\sin(2b)-\sin(2a)}{2\pi} +O\left(\frac{1}{M+1}\right)\\
        &= \frac{2}{\pi}\int_a^b \sin^2\theta~d\theta + O\left(\frac{1}{M+1}\right)
    \end{align*}
    Equation \eqref{variance} is less straightforward, and has been worked out in \cite[Proposition 3.6.1]{NPThesis}.
\end{proof}


\section{First Moment}\label{first-moment-section}
Throughout the rest of this article, whenever we write $ \Nr(\pfrak)\leq x$, in a sum, we will assume that the sum runs over prime ideals $\pfrak \nmid \mathfrak{n}$.

Approximating the quantity $N_I(\pi, x)$ above and below by the Beurling-Selberg polynomials, we have 
$$ \sum\limits_{\Nr(\pfrak)\leq x} F^{-}_{M}(\theta_\pi(\pfrak))\leq N_I(\pi, x) \leq \sum\limits_{\Nr(\pfrak)\leq x} F^{+}_{M}(\theta_\pi(\pfrak)).$$
Writing 
$$ \sum\limits_{\Nr(\pfrak)\leq x} F^{\pm}_{M}(\theta_\pi(\pfrak)) = \pi_L(x) \hat{F}^\pm_M(0) + \sum_{m=1}^M     \hat{F}^\pm_M(m)  \sum_{\Nr(\pfrak)\leq x} U_m(\cos (\theta_\pi(\pfrak))),$$
and using \eqref{expected-value1}, there exist constants $C$ and $D$ such that
\begin{align*}
    D\frac{\pi_L(x)}{M+1} + F^-(M,\pi)(x) \leq N_I(\pi, x) -\pi_L(x)\mu_\infty(I) \leq F^+(M, \pi)(x) + C\frac{\pi_L(x)}{M+1}
\end{align*} where 
\begin{equation}\label{F(M,pi)-defn}
    F^{\pm}(M,\pi)(x):=\sum_{m=1}^M \hat{F}_M^{\pm}(m) \sum_{\Nr(\pfrak)\leq x} U_m( \cos \theta_\pi(\pfrak)).
\end{equation}
 We compute the first moment by estimating
$$ \frac{1}{\#\PKN} \sum_{\pi\in \PKN} F^{\pm}(M,\pi)(x).$$ 

We first focus on the upper bound:
\begin{align*}
    N_I(\pi, x)\leq \pi_L(x) \hat{F}^+_M(0) + \sum_{m=1}^M     \hat{F}^+_M(m)  \sum_{\Nr(\pfrak)\leq x} U_m(\cos (\theta_\pi(\pfrak))).
\end{align*}
Since the first summand on the left-hand side is independent of $\pi$, in order to compute the first moment we need to estimate 
\begin{align*}
\sum_{m=1}^M     \hat{F}^+_M(m) \sum_{\Nr(\pfrak)\leq x}\frac{1}{\#\PKN} \sum_{\pi\in \PKN} U_m( \cos (\theta_\pi(\pfrak)).
\end{align*}
Using \eqref{trace-formula-LLW} we see that this is
\begin{align*}
&=\frac{1}{\#\PKN}\sum_{\substack{m=2\\ m \,\textrm{even}}}^M\hat{F}_M^+(m) \sum_{\Nr(\pfrak)\leq x} \left( C_L\Nr(\mathfrak{n})\prod_{i=1}^{d-1} \frac{k_i-1}{4\pi} 
  \Nr(\pfrak)^{-\frac{m}{2}}+O(  \Nr(\pfrak)^{\frac{3m}{2}}  \Nr(\mathfrak{n})^\epsilon)\right)\\
  & \quad + O_L\left( \sum_{m=1}^M\left|\hat{F}_M^+(m)\right| \sum_{\Nr(\pfrak)\leq x} \frac{ \Nr(\pfrak)^{\frac{3m}{2}}  \Nr(\mathfrak{n})^\epsilon }{\prod_{i=1}^{d-1} (k_i-1) }\right). 
\end{align*}

Using Proposition \ref{trace-formula} with $h=1$, the estimate $\hat{F}^+_M(m)\ll \frac{1}{m}$ and Lemma \ref{NP>=1}, we see that 
$$
\frac{1}{\#\PKN}\sum_{m=1}^M\hat{F}_M^+(m) \sum_{\Nr(\pfrak)\leq x}\sum_{\pi\in \PKN} U_m(\cos (\theta_\pi(\pfrak))\ll_L \log\log x+\frac{\pi_L(x) x^{\frac{3}{2}M}}{\prod_{i=1}^{d-1} k_i}.
$$
Keeping  \eqref{expected-value1} in mind we therefore get, 
$$
\frac{1}{\#\PKN} \sum_{\pi\in \PKN} N_I(\pi,x)-\pi_L(x) \mu_{\infty}(I) \ll_L \log\log x+ \frac{\pi_L(x) x^{\frac{3}{2}M}}{\prod_{i=1}^{d-1} k_i}+\frac{\pi_L(x)}{M+1} .
$$
On choosing $M=\left[\frac{2d\sum_{i=1}^{d-1} \log k_i}{3\log x}\right]$
for some $0<d<1$, we have proved the following proposition. 
\begin{proposition}
    Let $\sum_{i=1}^{d-1} \log k_i$ be a function of $x$. Then, for any $I\subseteq [0,\pi]$ we have $$
 \frac{1}{\#\PKN} \sum_{\pi\in \PKN} N_I(\pi,x) =\pi_L(x) \mu_{\infty}(I)+O_L\left( \frac{\pi_L (x)\log x}{\sum_{i=1}^{d-1} \log k_i}+\log\log x\right). 
$$

\end{proposition}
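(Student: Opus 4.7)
The plan is to sandwich $N_I(\pi,x)$ between the values of the Chebyshev-expanded Beurling-Selberg polynomials $F^\pm_M$ from Lemma \ref{lemma-RT} evaluated at the angles $\theta_\pi(\pfrak)$, then average over $\pi \in \PKN$. After pulling out the constant-in-$\pi$ main term $\pi_L(x)\hat{F}^\pm_M(0)$ and invoking \eqref{expected-value1} to identify it with $\pi_L(x)\mu_\infty(I) + O(\pi_L(x)/(M+1))$, the task reduces to bounding
$$\frac{1}{\#\PKN}\sum_{m=1}^M \hat{F}^\pm_M(m)\sum_{\Nr(\pfrak)\leq x}\sum_{\pi\in \PKN} U_m(\cos\theta_\pi(\pfrak)).$$

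Next, for each fixed pair $(m,\pfrak)$ I would apply the trace formula in Proposition \ref{trace-formula} with $h=1$ and $\mathfrak{a}=\pfrak^m$. The main term $C_L\Nr(\mathfrak{n})\prod (k_i-1)/(4\pi)\cdot \Nr(\pfrak)^{-m/2}\delta_{2\mid m}$, divided by $\#\PKN$ from \eqref{size of PKN}, contributes only for even $m\geq 2$ and produces $\Nr(\pfrak)^{-m/2}$. Combined with the Fourier coefficient bound $|\hat{F}^\pm_M(m)|\ll 1/m$ and Lemma \ref{NP>=1} (\eqref{sumnp1} for $m=2$, \eqref{sumnp>1} for $m\geq 4$), this part of the sum is $O_L(\log\log x)$. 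The trace-formula error $O(\Nr(\mathfrak{n})^\epsilon \Nr(\pfrak)^{3m/2})$, once divided by $\#\PKN$ and summed trivially over $\pfrak$ with $\Nr(\pfrak)\leq x$ and over $1\leq m\leq M$, yields an overall contribution of size $\pi_L(x) x^{3M/2}/\prod_{i=1}^{d-1}(k_i-1)$.

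Combining all three error sources gives
$$\frac{1}{\#\PKN}\sum_{\pi\in\PKN} N_I(\pi,x) - \pi_L(x)\mu_\infty(I) \ll_L \log\log x + \frac{\pi_L(x)}{M+1} + \frac{\pi_L(x)\, x^{3M/2}}{\prod_{i=1}^{d-1}(k_i-1)}.$$
The final step is to optimize $M$: choosing $M=\bigl\lfloor \tfrac{2c}{3}\sum_{i=1}^{d-1}\log k_i/\log x\bigr\rfloor$ for any fixed $0<c<1$ makes $x^{3M/2}\ll \prod k_i^{\,c}$, so the third term is absorbed into the second, and $\pi_L(x)/(M+1)\asymp \pi_L(x)\log x/\sum_{i=1}^{d-1}\log k_i$, matching the claimed bound. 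The only real subtlety is this balancing act: the Chebyshev expansion must be long enough for $F^\pm_M$ to approximate $\chi_I$ well, but short enough that the polynomial blow-up $\Nr(\pfrak)^{3m/2}$ in the trace-formula error is absorbed by the growth of the weight vector $\underline{k}$.
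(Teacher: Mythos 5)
Your proposal follows exactly the paper's argument: sandwich $N_I(\pi,x)$ with $F^\pm_M$, isolate $\pi_L(x)\hat{F}^\pm_M(0)$ via \eqref{expected-value1}, apply Proposition \ref{trace-formula} with $h=1$ together with $\hat{F}^\pm_M(m)\ll 1/m$ and Lemma \ref{NP>=1} to get the $\log\log x$ contribution, bound the trace-formula error trivially to obtain the $\pi_L(x)x^{3M/2}/\prod_{i=1}^{d-1}(k_i-1)$ term, and then balance against $\pi_L(x)/(M+1)$ by taking $M\asymp \sum\log k_i/\log x$. This is precisely the paper's proof (the paper even makes the same choice of $M$ up to a constant, and your renaming of that constant from the paper's unfortunate $d$ to $c$ is a cosmetic improvement).
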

{\bf Remark:} The above proposition is the analogue of the effective average Sato-Tate theorem for holomorphic cusp forms \cite[Proposition 4.1]{Prabhu-Sinha} and Maass forms \cite[Theorem 1.1]{Wang}.

\section{Outline of the proof of the main theorem.}\label{Outline}
We compute all higher moments by proving the following result.
\begin{thm} \label{higher-moments-theorem} Let $M = \lfloor \sqrt{\pi_L(x)}\log\log x\rfloor$ and $F^{\pm}(M,\pi)(x)$ be as defined in \eqref{F(M,pi)-defn}. Suppose $\frac{\sum_{i=1}^d \log k_i}{\sqrt{x}\log x} \to \infty$ as $x\to\infty$. Then,
    \begin{equation}
    \lim_{x\to\infty} \left\langle \left(\frac{F^{\pm}(M, \pi)(x))}{\sqrt{\pi_L(x)}}\right)^n\right\rangle  = \begin{cases}
        0 &\text{ if } n \text{ is odd,}\\
        \frac{n!}{2^{\frac{n}{2}}\frac{n}{2}!}\left( \mu_\infty(I) - \mu_\infty(I)^2 \right)^\frac{n}{2} &\text{ if } n \text{ is even.}
    \end{cases}
\end{equation}
\end{thm}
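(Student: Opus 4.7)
The plan is to apply the method of moments: for each fixed $n$, compute
$$\left\langle \left(\frac{F^{\pm}(M,\pi)(x)}{\sqrt{\pi_L(x)}}\right)^n\right\rangle$$
and show it converges to the $n$-th moment of a centered Gaussian with variance $\mu_\infty(I)-\mu_\infty(I)^2$. Expanding the $n$-th power, the task reduces to evaluating
$$\left\langle F^{\pm}(M,\pi)(x)^n\right\rangle = \sum_{m_1,\ldots,m_n=1}^M \left(\prod_{j=1}^n \hat F_M^{\pm}(m_j)\right) \sum_{\Nr(\pfrak_j)\leq x} \left\langle \prod_{j=1}^n U_{m_j}(\cos\theta_\pi(\pfrak_j))\right\rangle,$$
which is exactly the setting of Proposition \ref{trace-formula} once the Chebyshev factors attached to coinciding primes are combined.

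The first step is to classify prime tuples $(\pfrak_1,\ldots,\pfrak_n)$ by the partition of $\{1,\ldots,n\}$ they induce under equality of primes. For each such partition, I would use \eqref{recursive-rel'ns} to rewrite the product of $U_{m_j}$'s attached to a common prime $\pfrak$ as a sum of single Chebyshev polynomials $U_\ell(\cos\theta_\pi(\pfrak))$, then apply Proposition \ref{trace-formula}. The dominant contributions come from perfect matchings (which exist only for even $n$): within a matched pair sharing prime $\pfrak$ and indices $m_j,m_{j'}$, the expansion $U_{m_j}U_{m_{j'}} = \sum_{k=0}^{\min(m_j,m_{j'})} U_{m_j+m_{j'}-2k}$ has a constant-term piece $U_0=1$ precisely when $m_j=m_{j'}$, and only this piece survives trace-formula averaging without a decaying factor $\Nr(\pfrak)^{-\ell/2}$ with $\ell\geq 1$. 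Using \eqref{size of PKN} and \eqref{variance}, a single matched pair thus contributes, to leading order,
$$\pi_L(x)\sum_{m=1}^M \hat F_M^{\pm}(m)^2 = \pi_L(x)\bigl(\mu_\infty(I)-\mu_\infty(I)^2\bigr) + O\!\left(\pi_L(x)\,\frac{\log M}{M}\right),$$
and since the number of perfect matchings of an $n$-element set is $n!/(2^{n/2}(n/2)!)$, this produces the claimed even moment after normalizing by $\pi_L(x)^{n/2}$.

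The remaining partitions must be shown to contribute $o(\pi_L(x)^{n/2})$. Singleton blocks (which are forced when $n$ is odd) contribute at most $\sum_\pfrak \Nr(\pfrak)^{-1} = O(\log\log x)$ by \eqref{sumnp1}; mismatched pair blocks with $m_j\neq m_{j'}$ pick up a nonzero decaying power of $\Nr(\pfrak)$ in the surviving trace-formula main term and are similarly controlled via Lemma \ref{NP>=1}. Blocks of size at least three produce strictly fewer than $\lfloor n/2\rfloor$ factors of $\pi_L(x)$, hence vanish after normalization. Careful bookkeeping of these subdominant contributions — in particular showing the $\log\log x$ factors never outpace the missing factor of $\sqrt{\pi_L(x)}$ — will complete the main-term analysis.

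The main obstacle will be controlling the cumulative error from the remainder $O(\Nr(\mathfrak{n})^\epsilon \Nr(\mathfrak{a})^{3/2})$ in Proposition \ref{trace-formula}. Summed over all index tuples in $\{1,\ldots,M\}^n$ and prime tuples with $\Nr(\pfrak_j)\leq x$, and divided by $\#\PKN \asymp \Nr(\mathfrak{n})\prod_i k_i$ via \eqref{size of PKN}, the total error grows roughly like $x^{O(nM)} \Nr(\mathfrak{n})^\epsilon / \prod_i k_i$. With $M = \lfloor \sqrt{\pi_L(x)}\log\log x \rfloor$, the hypothesis $\sum_{i=1}^d \log k_i/(\sqrt{x}\log x)\to\infty$ is calibrated exactly so that $\prod_i k_i$ dominates $x^{O(nM)}$, making this remainder negligible against the main term of order $\pi_L(x)^{n/2}$. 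Once this estimate is in place, the method-of-moments conclusion follows from the classical fact that Gaussian distributions are determined by their moments.
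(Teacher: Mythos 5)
Your outline is correct and would yield the theorem, but it follows the more computational route of \cite{Prabhu-Sinha} rather than the one in this paper. You classify prime $n$-tuples by the partition of $\{1,\ldots,n\}$ induced by equality of primes, repeatedly apply the Chebyshev linearization \eqref{recursive-rel'ns} to compress each block's product of $U_{m_j}$'s into a single $U_\ell$, and then feed the result into Proposition \ref{trace-formula}, extracting the $U_0$ piece (occurring exactly when a matched pair has $m_j=m_{j'}$) as the sole non-decaying contribution. The paper instead first recasts the main term of the trace formula as an integral: Lemma \ref{key-lemma} shows $\frac{1}{\#\PKN}\sum_\pi\prod_i U_{m_i}(\cos\theta_\pi(\pfrak_i))$ equals $\prod_i\int_0^\pi U_{m_i}(\cos\theta)\,d\mu_{\nu_i}(\theta)$ up to the same error, so after expanding $\bigl(\sum_\pfrak Z_M^\pm\bigr)^n$ over compositions $(r_1,\ldots,r_u)$ and distinct primes, a block of size $r_i$ contributes simply $\int_0^\pi Z_M^\pm(\theta)^{r_i}\,d\mu_{\nu_i}(\theta)$. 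Using $d\mu_\nu=d\mu_\infty+O(1/\Nr(\pfrak))$ and orthogonality, this integral is $\sum_m\hat F_M^\pm(m)^2+O(1/\Nr(\pfrak))$ for $r_i=2$, $O(1/\Nr(\pfrak))$ for $r_i=1$, and $O(1)$ for $r_i\geq 3$, which immediately gives your three cases without having to track which terms of the linearization survive averaging. The two approaches prove the same estimates; what the paper's route buys is that the combinatorial bookkeeping with \eqref{recursive-rel'ns}, which becomes delicate for blocks of size $\geq 3$, is entirely replaced by boundedness of $Z_M^\pm$ and orthogonality in the $\mu_\infty$-inner product. Your error analysis, calibration of $M=\lfloor\sqrt{\pi_L(x)}\log\log x\rfloor$ against the hypothesis $\sum_i\log k_i/(\sqrt{x}\log x)\to\infty$, and the final appeal to moment determinacy of the Gaussian all match the paper's.

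Two small things to make explicit if you write this up fully: (i) when $m_j\neq m_{j'}$ in a pair the linearized indices $m_j+m_{j'}-2k$ are all $\geq |m_j-m_{j'}|\geq 1$, so the lowest surviving even index is $2$, and summing $\Nr(\pfrak)^{-1}$ over $\pfrak$ gives $O(\log\log x)$ by \eqref{sumnp1}, while higher even indices are $O(1)$ by \eqref{sumnp>1}; (ii) the combinatorial count of perfect matchings, $n!/(2^{n/2}(n/2)!)$, comes with no further overcounting because you sum over unordered pair partitions — be careful here, since the analogous multinomial factor in the paper's displayed expansion contains a typo ($1/u$ should read $1/u!$), which is silently corrected in its Case~1 computation.
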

Following the strategy in \cite[Section 6]{Prabhu-Sinha}, and making the necessary modifications i.e., using the trace formula in Proposition \ref{trace-formula-LLW} instead of the Eichler-Selberg trace formula, one can prove the following analogue of \cite[Proposition 6.2]{Prabhu-Sinha}. The proof is very similar, so we omit details. 
\begin{proposition}
    Let $I= [a,b]\subseteq [0,\pi]$ and $M = \lfloor \sqrt{\pi_L(x)} \log\log x\rfloor$. Suppose $\frac{\sum_{i=1}^d \log k_i}{\sqrt{x}\log x} \to \infty$ as $x\to\infty$,
    $$ \lim\limits_{x\to\infty} \left\langle \left| \frac{N_I(\pi, x) -\pi_L(x)\mu_\infty(I) - F^{\pm}(M,\pi)(x)}{\sqrt{ \pi_L(x)(\mu_\infty(I) - \mu_\infty(I)^2)}} \right|^2 \right\rangle = 0.$$
\end{proposition}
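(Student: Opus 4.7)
The plan is to reduce the claim to a second-moment bound on the Beurling--Selberg gap, which in turn reduces to Proposition \ref{trace-formula}. Combining $F^{-}_M \leq \chi_I \leq F^{+}_M$ (Lemma \ref{lemma-RT}) with the identity $\sum_{\Nr(\pfrak)\leq x} F^{\pm}_M(\theta_\pi(\pfrak)) = \hat{F}^{\pm}_M(0)\pi_L(x) + F^{\pm}(M,\pi)(x)$ and \eqref{expected-value1} yields the pointwise bound
$$\bigl|N_I(\pi,x) - \pi_L(x)\mu_\infty(I) - F^{\pm}(M,\pi)(x)\bigr| \leq \bigl|F^{+}(M,\pi)(x) - F^{-}(M,\pi)(x)\bigr| + O\!\left(\frac{\pi_L(x)}{M}\right).$$
After squaring, averaging over $\pi$, and dividing by $\pi_L(x)$, the $O(\pi_L(x)/M)$ term contributes $O(\pi_L(x)/M^2) = O(1/(\log\log x)^2)$, using $M = \lfloor\sqrt{\pi_L(x)}\log\log x\rfloor$, so it suffices to show $\bigl\langle (F^{+}(M,\pi)(x) - F^{-}(M,\pi)(x))^2 \bigr\rangle = o(\pi_L(x))$.

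Set $c_m := \hat{F}^{+}_M(m) - \hat{F}^{-}_M(m)$. From \eqref{S-hat}, $c_m = O(1/M)$ uniformly, hence $\sum_{m=1}^M c_m^2 = O(1/M)$. Expanding,
$$\bigl\langle (F^{+}(M,\pi)(x) - F^{-}(M,\pi)(x))^2 \bigr\rangle = \sum_{m_1,m_2=1}^M c_{m_1}c_{m_2} \sum_{\pfrak_1,\pfrak_2}\bigl\langle U_{m_1}(\cos\theta_\pi(\pfrak_1)) U_{m_2}(\cos\theta_\pi(\pfrak_2))\bigr\rangle.$$
For $\pfrak_1 \neq \pfrak_2$ I would apply Proposition \ref{trace-formula} with $h=2$ directly; for $\pfrak_1 = \pfrak_2$ I would first linearize via \eqref{recursive-rel'ns} and then apply the $h=1$ case. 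Normalizing by $\#\PKN$ via \eqref{size of PKN}, the trace formula main term produces only even-indexed factors $\Nr(\pfrak)^{-j/2}$. By Lemma \ref{NP>=1}, the off-diagonal main contribution is bounded by $\bigl(\sum_{m\geq 2\text{ even}}|c_m|\sum_\pfrak \Nr(\pfrak)^{-m/2}\bigr)^2 = O(1)$; the diagonal main contribution decomposes into a $j=0$ piece $\pi_L(x)\sum_m c_m^2 = O(\pi_L(x)/M)$, a $j=2$ piece of size $O(\log\log x)$ (via Cauchy--Schwarz on $\sum c_m c_{m\pm 2}$), and $j\geq 4$ pieces of total size $O(M)$, all $o(\pi_L(x))$. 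The trace formula error contributes at most
$$\frac{\Nr(\mathfrak{n})^\epsilon \pi_L(x)^2}{\prod_i(k_i-1)}\Bigl(\sum_{m=1}^M |c_m| x^{3m/2}\Bigr)^2 \ll \frac{\pi_L(x)^2 x^{3M}}{M^2\prod_i k_i},$$
which after dividing by $\pi_L(x)$ is $o(1)$ since $3M\log x = O(\sqrt{x\log x}\log\log x) = o(\sum_i \log k_i)$ under the hypothesis $\sum_i \log k_i/(\sqrt{x}\log x)\to\infty$.

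The principal obstacle is the parity and cross-term bookkeeping in the diagonal sum: after linearizing $U_{m_1}U_{m_2} = \sum_k U_{m_1+m_2-2k}$, one must isolate the $k = m_1 = m_2$ (i.e.\ $U_0$) piece, handle the $j=2$ piece that picks up a single $\log\log x$ from Lemma \ref{NP>=1}, and bound the higher pieces by grouping pairs $(m_1,m_2)$ with fixed difference. This is a reorganization of the variance step in \cite[Proposition 6.2]{Prabhu-Sinha}, made substantially cleaner because working entirely in the Chebyshev basis lets the single bound $\sum c_m^2 = O(1/M)$ control all three regimes uniformly.
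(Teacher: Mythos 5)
Your proof is correct and follows essentially the same route the paper intends; the paper omits details, citing only that the argument is the analogue of \cite[Proposition 6.2]{Prabhu-Sinha} with the trace formula of Proposition \ref{trace-formula} replacing Eichler--Selberg. Your reduction to $\langle (F^{+}(M,\pi)(x)-F^{-}(M,\pi)(x))^2\rangle = o(\pi_L(x))$ via the sandwich inequality and the uniform bound $c_m = O(1/M)$, together with the off-diagonal/diagonal split using \eqref{recursive-rel'ns} and the norm-power sums of Lemma \ref{NP>=1}, is precisely that argument carried out in the Chebyshev basis.
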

This implies that under the conditions of the above proposition, the quantity $$\frac{F^{\pm}(M,\pi)(x)}{\sqrt{ \pi_L(x)(\mu_\infty(I) - \mu_\infty(I)^2)}}$$ converges in mean square to 
\begin{equation}\label{Normalized RV}
    \frac{N_I(\pi, x) -\pi_L(x)\mu_\infty(I) }{\sqrt{ \pi_L(x)(\mu_\infty(I) - \mu_\infty(I)^2)}}
\end{equation} as $x\to\infty$. 
Since convergence in mean square implies convergence in distribution (see \cite[Chapter 6, Theorems 5 and 7]{Rohtagi-book}), and the normal distribution is characterized by its moments, Theorem \ref{higher-moments-theorem} immediately gives us that the quantity in \eqref{Normalized RV} follows the Gaussian distribution. This completes the proof of Theorem \ref{main-thm}.
\section{Higher Moments}\label{higher-moments}
In this section we give the details of the proof of Theorem \ref{higher-moments-theorem}. In order to simplify calculations while computing higher moments, it is useful to express the main term of the trace formula \ref{trace-formula-LLW} using an integral. The following lemma is Proposition 29.11 in \cite{Knightly-Li}, with the prime $p$ replaced with $\Nr(\pfrak)$. We prove it here for completeness.
\begin{lemma}\label{key-lemma}
    For a non-archimedean valuation $\nu$ corresponding to a prime ideal $\pfrak$, we define
    \begin{equation}
        d\mu_\nu (\theta) = \frac{ \Nr(\pfrak) + 1}{\left( \Nr(\pfrak)^{\frac{1}{2}} + \Nr(\pfrak)^{-\frac{1}{2}}\right)^2 -4\cos^2\theta}d\mu_\infty(\theta),
    \end{equation}
    where $ d\mu_\infty(\theta) = \frac{2}{\pi} \sin^2\theta ~d\theta.$ Then,
    \begin{equation}\label{alt-trace-integral}
        \int_0^{\pi} U_{m}(\cos\theta)d\mu_\nu(\theta) = \begin{cases}
            \Nr(\pfrak)^{-\frac{m}{2}} & \text{ if $m$ is even}\\
            0 & \text{ if $m$ is odd}.
        \end{cases}
    \end{equation}
\end{lemma}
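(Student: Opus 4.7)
The plan is to obtain an explicit Chebyshev expansion of the Radon--Nikodym derivative $d\mu_\nu/d\mu_\infty$, after which the lemma will follow immediately from the orthonormality relations \eqref{orthogonality-rel'ns}. Writing $q=\Nr(\pfrak)$ and $A = q^{1/2}+q^{-1/2}$, I would first factor the denominator as
$$
(q^{1/2}+q^{-1/2})^2 - 4\cos^2\theta = (A-2\cos\theta)(A+2\cos\theta),
$$
and split by partial fractions:
$$
\frac{1}{(A-2\cos\theta)(A+2\cos\theta)} = \frac{1}{2A}\left(\frac{1}{A-2\cos\theta}+\frac{1}{A+2\cos\theta}\right).
$$

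Next, I would apply the generating identity $\sum_{m\geq 0} U_m(\cos\theta)t^m = (1-2t\cos\theta+t^2)^{-1}$ at the two points $t = \pm q^{-1/2}$. A short rearrangement gives $1\mp 2q^{-1/2}\cos\theta + q^{-1} = q^{-1/2}(A\mp 2\cos\theta)$, from which
$$
\frac{1}{A\mp 2\cos\theta} = \sum_{m\geq 0}(\pm 1)^m q^{-(m+1)/2}U_m(\cos\theta).
$$
Adding these two expansions annihilates the odd-index terms and yields
$$
\frac{2A}{A^2-4\cos^2\theta} = 2\sum_{m\text{ even}} q^{-(m+1)/2}U_m(\cos\theta).
$$
Since $A = q^{-1/2}(q+1)$, multiplying through by $(q+1)/(2A) = q^{1/2}/2$ produces the desired Chebyshev expansion
$$
\frac{d\mu_\nu}{d\mu_\infty}(\theta) = \frac{q+1}{A^2-4\cos^2\theta} = \sum_{m\text{ even}} q^{-m/2}\,U_m(\cos\theta).
$$

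To finish, I would substitute this expansion into
$$
\int_0^\pi U_m(\cos\theta)\,d\mu_\nu(\theta) = \int_0^\pi U_m(\cos\theta)\cdot\frac{d\mu_\nu}{d\mu_\infty}(\theta)\,d\mu_\infty(\theta)
$$
and apply the orthonormality relations \eqref{orthogonality-rel'ns} term by term; this extracts the single coefficient $q^{-m/2}$ when $m$ is even and $0$ when $m$ is odd, proving the lemma. The only mildly delicate point is the algebraic bookkeeping needed to reconcile the given density with the generating function of the $U_m$; once one observes the identity $q(1-2q^{-1/2}\cos\theta+q^{-1}) = q^{1/2}(A-2\cos\theta)$, the rest is essentially mechanical, and uniform convergence for $q\geq 2$ justifies the interchange of sum and integral.
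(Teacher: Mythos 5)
Your proof is correct and follows essentially the same route as the paper: both evaluate the generating function $\sum_m U_m(\cos\theta)t^m = (1-2t\cos\theta+t^2)^{-1}$ at $t=\pm\Nr(\pfrak)^{-1/2}$, add to kill the odd-index terms, identify the resulting series with the Radon--Nikodym density $d\mu_\nu/d\mu_\infty$, and finish by orthonormality. The only cosmetic difference is that you first factor the denominator and set up a partial-fraction decomposition before invoking the generating identity, whereas the paper simply adds the two evaluations of the generating function and recognizes the closed form directly; the underlying computation is identical.
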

\begin{proof}
    For $|t|<1$ and $x\in [-1,1]$, the generating function of the Chebyshev polynomials of the second kind is given by 
    \begin{equation}
        \sum_{n=0}^\infty U_n(x)t^n = \frac{1}{1-2xt +t^2}.
    \end{equation} Substituting $t= \pm \Nr(\pfrak)^{-\frac{1}{2}}$ and adding, we obtain
    $$\sum_{n=0}^\infty U_{2n}(x)\Nr(\pfrak)^{-n} = \frac{ \Nr(\pfrak) + 1}{\left( \Nr(\pfrak)^{\frac{1}{2}} + \Nr(\pfrak)^{-\frac{1}{2}}\right)^2 -4x^2}. $$
    Denoting the above quantity by $u_\pfrak(x)$, and letting $x=\cos\theta$, we see that 
    \begin{align*}
         \int_0^{\pi} U_{m}(\cos\theta)d\mu_\nu(\theta) &= \int_0^{\pi} U_{m}(\cos\theta) \left( \sum_{n=0}^\infty U_{2n}(x)\Nr(\pfrak)^{-n}\right) d\mu_\infty(\theta)\\
         &= \begin{cases}
            \Nr(\pfrak)^{-\frac{m}{2}} & \text{ if $m$ is even}\\
            0 & \text{ if $m$ is odd},
            \end{cases}
    \end{align*}
    using \eqref{orthogonality-rel'ns}, the orthogonality of the Chebyshev polynomials.
\end{proof}

As a consequence of Lemma \ref{key-lemma}, the following alternate expression for the trace formula \eqref{trace-formula-LLW} holds.
\begin{lemma}\label{alternate version of trace formula}
Let $\underline{m}=(m_1,\ldots,m_h)$ be a tuple of non-negative integers. Further, let $\pfrak_1,\ldots, \pfrak_h$ 
 be distinct prime ideals coprime to the squarefree ideal $\mathfrak{n}.$ Then, 
\begin{equation}\label{alt-trace-formula}
    \frac{1}{\#\PKN}\sum_{\pi \in \PKN} \prod_{i=1}^h U_{m_i}(\cos\theta_\pi(\pfrak_i)) = \prod_{i=1}^h \int_0^\pi U_{m_i}(\cos\theta)d\mu_{\nu_i}(\theta) + O_{L, \mathfrak{n}}\left( \frac{\prod_{i=1}^h \Nr(\pfrak_i)^{\frac{3m_i}{2}}}{\prod_{i=1}^d k_i} \right). 
\end{equation}   
\end{lemma}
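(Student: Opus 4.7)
The plan is to derive the statement essentially as a rewriting of the trace formula \eqref{trace-formula-LLW} after normalizing by $\# \PKN$, with Lemma \ref{key-lemma} supplying the clean integral form of the main term. First I would divide both sides of \eqref{trace-formula-LLW} by $\# \PKN$. Writing $\Nr(\mathfrak{a})^{-1/2} = \prod_{i=1}^h \Nr(\pfrak_i)^{-m_i/2}$ and using \eqref{size of PKN}, the main-term prefactor satisfies
\[
\frac{C_L \Nr(\mathfrak{n}) \prod_{i=1}^d \frac{k_i-1}{4\pi}}{\# \PKN} \;=\; 1 + O_{L,\mathfrak{n}}\!\left(\frac{1}{\prod_{i=1}^d k_i}\right),
\]
so the main term on the right becomes
\[
\delta_{2\mid\underline{m}}\,\Nr(\mathfrak{a})^{-1/2} \;+\; O_{L,\mathfrak{n}}\!\left(\frac{\Nr(\mathfrak{a})^{-1/2}}{\prod_{i=1}^d k_i}\right).
\]

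Second, I would invoke Lemma \ref{key-lemma} to recognize
\[
\delta_{2\mid\underline{m}}\,\Nr(\mathfrak{a})^{-1/2} \;=\; \prod_{i=1}^h \int_0^{\pi} U_{m_i}(\cos\theta)\, d\mu_{\nu_i}(\theta),
\]
since each single-variable integral produces $\Nr(\pfrak_i)^{-m_i/2}$ for even $m_i$ and zero otherwise, so the product vanishes unless every $m_i$ is even, exactly matching the indicator $\delta_{2\mid\underline{m}}$.

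Third, I would handle the error from the trace formula itself. The $O\bigl(\Nr(\mathfrak{n})^\varepsilon \Nr(\mathfrak{a})^{3/2}\bigr)$ term, after division by $\# \PKN \gg_{L,\mathfrak{n}} \prod_{i=1}^d k_i$, contributes $O_{L,\mathfrak{n}}\!\bigl(\Nr(\mathfrak{a})^{3/2}/\prod_{i=1}^d k_i\bigr)$. The residual error from the prefactor approximation is of size $\Nr(\mathfrak{a})^{-1/2}/\prod_i k_i$, which is dominated by $\Nr(\mathfrak{a})^{3/2}/\prod_i k_i$ since $\Nr(\mathfrak{a}) \geq 1$. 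Absorbing everything yields the claimed bound.

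There is no genuine obstacle here; the lemma is really a bookkeeping reformulation packaging \eqref{trace-formula-LLW}, \eqref{size of PKN}, and Lemma \ref{key-lemma}. The only point requiring care is tracking where the dependence on $\mathfrak{n}$ goes: since $\mathfrak{n}$ is fixed throughout the application, it is harmless to absorb the factor $\Nr(\mathfrak{n})^\varepsilon$ and the lower bound $\# \PKN \gg_{L,\mathfrak{n}} \prod_i k_i$ into the implicit constant, producing the subscript $O_{L,\mathfrak{n}}$ in the statement.
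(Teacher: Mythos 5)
Your proposal is correct and matches the paper's own (terse) proof, which simply cites \eqref{alt-trace-integral} and \eqref{size of PKN} applied to \eqref{trace-formula-LLW}; you have filled in exactly the intended bookkeeping, including the observation that the prefactor-normalization error $O_{L,\mathfrak{n}}\bigl(\Nr(\mathfrak{a})^{-1/2}/\prod_i k_i\bigr)$ is absorbed into $O_{L,\mathfrak{n}}\bigl(\Nr(\mathfrak{a})^{3/2}/\prod_i k_i\bigr)$.
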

\begin{proof}
    This follows easily by using \eqref{alt-trace-integral} in the main term of trace formula in \eqref{trace-formula-LLW} and noting \eqref{size of PKN}.
\end{proof}
We now proceed to calculate the higher moments $\left\langle \left(\frac{F^{\pm}(M, \pi)(x))}{\sqrt{\pi_L(x)}}\right)^n\right\rangle$.
Observe that 
\begin{align}\label{Highermoment eqn}
    \left\langle \left(\frac{F^{\pm}(M, \pi)(x))}{\sqrt{\pi_L(x)}}\right)^n\right\rangle &=
    \frac{1}{\#\PKN}\frac{1}{\pi_L(x)^{\frac{n}{2}}} \sum_{\pi \in \PKN} \left( \sum_{\Nr(\pfrak)\leq x} \sum_{m=1}^M \hat{F}^{\pm}_M(m)U_m(\cos\theta_\pi(\pfrak))\right)^n
\end{align}
Let $Z^{\pm}_M( \theta) = \sum_{m=1}^M \hat{F}_M^{\pm}(m)U_m(\cos\theta)$. 
Then 
\begin{align*}
    \left( \sum_{\Nr(\pfrak)\leq x} \sum_{m=1}^M \hat{F}^{\pm}_M(m)U_m(\cos\theta_\pi(\pfrak))\right)^n &= \left( \sum_{\Nr(\pfrak)\leq x} Z_M^{\pm}(\theta_\pi(\pfrak))\right)^n
    \\
    &= \sum_{u=1}^n\sum_{(r_1,\ldots,r_u)}^{(1)} \frac{n!}{r_1!\cdots r_u!}\frac{1}{u} \sum_{(\pfrak_1, \ldots, \pfrak_u)}^{(2)} \prod_{i=1}^u Z_M^{\pm}(\theta_\pi(\pfrak_i))^{r_i},
\end{align*}
where 
\begin{itemize}
    \item The sum $\sum\limits_{(r_1,\ldots,r_u)}^{(1)}$ is taken over tuples of positive integers $(r_1, \ldots, r_u)$ such that $r_1+\cdots+r_u=n$.
    \item The sum $\sum\limits_{(\pfrak_1, \ldots, \pfrak_u)}^{(2)}$ is taken over tuples of distinct prime ideals $\pfrak_1, \ldots, \pfrak_u$ coprime to $\mathfrak{n}$ with $\Nr(\pfrak_i)\leq x$ for each $i=1,\ldots, u$.
\end{itemize}

Let $[f(\theta)\bullet U_n(\cos\theta)]$ denote the $n$-th Fourier coefficient when $f$ is expanded as a Fourier series using the orthogonal basis of Chebyshev polynomials of the second kind. Explicitly, 
$$ [f(\theta)\bullet U_n(\cos\theta)] = \frac{2}{\pi} \int_0^\pi f(\theta)\, U_n(\cos\theta) \sin^2\theta ~d\theta.$$Expanding  each $Z_M^{\pm}(\theta_\pi(\pfrak_i))^{r_i}$ in this way we have
\begin{align*}
    \prod_{i=1}^u Z_M^{\pm}(\theta_\pi(\pfrak_i))^{r_i} &= \prod_{i=1}^u \left( \sum_{m_i=1}^{Mr_i} \left[Z^\pm_M(\theta)^{r_i} \bullet U_{m_i}(\cos\theta)\right] 
          U_{m_i}(\cos\theta_\pi(\pfrak_i))\right)\\
    &= \sum_{(m_1,\ldots, m_u)}^{(3)}\prod_{i=1}^u \left[Z^\pm_M(\theta)^{r_i} \bullet U_{m_i}(\cos\theta)\right]   U_{m_i}(\cos\theta_\pi(\pfrak_i)).
\end{align*}
Here, $\sum\limits_{(m_1,\ldots, m_u)}^{(3)}$ denote that the sum is taken over tuples where each $m_i$ ranges from $1$ to $Mr_i$. Averaging over $\pi \in \PKN$, and using the trace formula given in equation \eqref{alt-trace-formula}, 
\begin{align*}
    &\frac{1}{\#\PKN} \sum_{\pi\in\PKN}  \prod_{i=1}^u Z_M^{\pm}(\theta_\pi(\pfrak_i))^{r_i} \\
    &= \sum_{(m_1,\ldots, m_u)}^{(3)}   \prod_{i=1}^u \left[Z^\pm_M(\theta)^{r_i} \bullet U_{m_i}(\cos\theta)\right]   \frac{1}{\#\PKN} \sum_{\pi\in\PKN} \prod_{i=1}^u U_{m_i}(\cos\theta_\pi(\pfrak_i))\\
    &= \sum_{(m_1,\ldots, m_u)}^{(3)}   \prod_{i=1}^u 
 \left[Z^\pm_M(\theta)^{r_i} \bullet U_{m_i}(\cos\theta)\right]   \left( \prod_{i=1}^u \int_0^\pi U_{m_i}(\cos\theta)d\mu_{\nu_i}(\theta) + O_{L, \mathfrak{n}}\left( \frac{\prod_{i=1}^u \Nr(\pfrak_i)^{\frac{3}{2}m_i}}{\prod_{i=1}^d k_i} \right)   \right) \\
    &= \prod_{i=1}^u \int_0^\pi Z_M^{\pm}(\theta)^{r_i}~d\mu_{\nu_i}(\theta) + O_{L,\mathfrak{n}}\left(  \frac{1}{\prod_{i=1}^d k_i} \prod_{i=1}^u \sum_{m_i=1}^{Mr_i}  \langle Z_M^{\pm}( \cdot )^{r_i}, U_{m_i}(\cos 
 \cdot)\rangle \Nr(\pfrak_i)^{\frac{3}{2}m_i} \right).
\end{align*}
Let us estimate the error term. Observe that $Z_M^{\pm}(\theta) = F_M^{\pm}(\theta) - \hat{F}_M^{\pm}(0)$, so it is absolutely bounded. Therefore, $[Z_M^{\pm}(\theta)^{r_i}\bullet  U_{m_i}(\cos\theta)] \ll m_i$ for each $i=1,\ldots, u$ using the trivial bound $ |U_n(\cos\theta)|\leq n+1.$ We also know that $\Nr(\pfrak_i)\leq x$, so 
\begin{equation*}
    \prod_{i=1}^u \sum_{m_i=1}^{Mr_i} \left[Z^\pm_M(\theta)^{r_i} \bullet U_{m_i}(\cos\theta)\right]   \Nr(\pfrak_i)^{\frac{3}{2}m_i} \ll_n M^{2u}x^{\frac{3}{2}Mu}.
\end{equation*}
Therefore, the higher moments
$$\left\langle \left(\frac{F^{\pm}(M, \pi)(x))}{\sqrt{\pi_L(x)}}\right)^n\right\rangle $$
are given by 
\begin{align}
  \label{higher-moments-integral}  & \frac{1}{\pi_L(x)^{\frac{n}{2}}}\sum_{u=1}^n\sum_{(r_1,\ldots,r_u)}^{(1)} \frac{n!}{r_1!\cdots r_u!}\frac{1}{u} \sum_{(\pfrak_1, \ldots, \pfrak_u)}^{(2)} \prod_{i=1}^u \int_0^\pi Z_M^{\pm}(\theta)^{r_i}~d\mu_\nu(\theta) + O_{L,n}\left( \frac{M^{2n}x^{\frac{3}{2}Mn}\pi_L(x)^{\frac{n}{2}}}{\prod_{i=1}^d k_i}\right). 
\end{align}
We now analyze the main term in the above equation. First, we prove:
\begin{lemma}
    Assume the notations introduced earlier. Then, the following hold. 
 \begin{align}
     \label{connecting mu_p to mu_infty}
    \int_0^\pi Z_M^{\pm}(\theta)^{r}~d\mu_\nu(\theta) & = \int_0^\pi Z_M^{\pm}(\theta)^{r} ~d\mu_\infty(\theta) + O\left( \frac{1}{\Nr(\pfrak)}\right)\\
    \label{int-Z_M} \int_0^\pi Z_M^{\pm}(\theta)~d\mu_\nu(\theta) &= O\left( \frac{1}{\Nr(\pfrak)}\right)\\
    \label{int-Z_M^r} \int_0^\pi Z_M^{\pm}(\theta)^{r}~d\mu_\nu(\theta) & =\begin{cases}
        \sum_{m=1}^M \hat{F}_M^{\pm}(m)^2 + O\left( \frac{1}{\Nr(\pfrak)}\right) & \text{ if } r=2\\
        O(1) &\text{ for } r>2.
    \end{cases}
 \end{align}
\end{lemma}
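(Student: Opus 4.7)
The plan is to first establish a pointwise bound on the density ratio $d\mu_\nu/d\mu_\infty$, and then to deduce each of the three statements from this bound together with Lemma \ref{key-lemma}. A direct algebraic manipulation gives
\[
\frac{d\mu_\nu(\theta)}{d\mu_\infty(\theta)} - 1 \;=\; \frac{4\cos^2\theta - 1 - \Nr(\pfrak)^{-1}}{\bigl(\Nr(\pfrak)^{1/2}+\Nr(\pfrak)^{-1/2}\bigr)^2 - 4\cos^2\theta},
\]
whose numerator is uniformly bounded and whose denominator is $\Nr(\pfrak) + O(1)$, so the ratio equals $1 + O(\Nr(\pfrak)^{-1})$ uniformly in $\theta \in [0,\pi]$.

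With this in hand, \eqref{connecting mu_p to mu_infty} follows at once: since $Z_M^{\pm}(\theta) = F_M^{\pm}(\theta) - \hat{F}_M^{\pm}(0)$ is absolutely bounded independently of $M$, pulling the density difference out of the integral yields
\[
\int_0^\pi Z_M^{\pm}(\theta)^r\bigl(d\mu_\nu - d\mu_\infty\bigr)(\theta) \;\ll\; \Nr(\pfrak)^{-1}\int_0^\pi |Z_M^{\pm}(\theta)|^r\, d\mu_\infty(\theta) \;=\; O(\Nr(\pfrak)^{-1}).
\]

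For \eqref{int-Z_M}, I will expand $Z_M^{\pm}(\theta) = \sum_{m=1}^M \hat{F}_M^{\pm}(m)\, U_m(\cos\theta)$ and integrate termwise using Lemma \ref{key-lemma}, which annihilates the odd-$m$ contributions and leaves
\[
\int_0^\pi Z_M^{\pm}(\theta)\, d\mu_\nu(\theta) \;=\; \sum_{\substack{2 \leq m \leq M \\ m \text{ even}}} \hat{F}_M^{\pm}(m)\, \Nr(\pfrak)^{-m/2}.
\]
Using the bound $|\hat{F}_M^{\pm}(m)| \ll 1/m + 1/(M+1)$ that follows from \eqref{mathcal-S-explicit} and the definition of $\hat{F}_M^{\pm}$ via $\hat{\mathcal{S}}_{J,M}^{\pm}$, the geometric sum is controlled by its $m=2$ term and yields the claimed $O(\Nr(\pfrak)^{-1})$.

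Finally, for \eqref{int-Z_M^r}, the $r = 2$ case follows by applying \eqref{connecting mu_p to mu_infty} with $r=2$ and then invoking the orthogonality relation \eqref{orthogonality-rel'ns} to evaluate $\int_0^\pi Z_M^{\pm}(\theta)^2\, d\mu_\infty(\theta) = \sum_{m=1}^M \hat{F}_M^{\pm}(m)^2$. The case $r > 2$ is immediate from the uniform boundedness of $Z_M^{\pm}$ together with the fact that $\mu_\nu$ is a probability measure on $[0,\pi]$ (apply Lemma \ref{key-lemma} with $m=0$). The main delicate point is in \eqref{int-Z_M}, where the $1/(M+1)$ contribution in the Fourier coefficient bound must be checked not to spoil the $O(\Nr(\pfrak)^{-1})$ rate; this works precisely because the geometric factor $\Nr(\pfrak)^{-m/2}$ forces the tail sum to be at most a constant multiple of $\Nr(\pfrak)^{-1}$, regardless of how $M$ compares to $\Nr(\pfrak)$.
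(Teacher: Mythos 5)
Your proof is correct and follows the paper's overall strategy: establish that the density ratio $d\mu_\nu/d\mu_\infty = 1 + O(\Nr(\pfrak)^{-1})$ uniformly in $\theta$, then combine with boundedness of $Z_M^{\pm}$ and orthogonality of the Chebyshev basis. Your treatment of \eqref{connecting mu_p to mu_infty} and of \eqref{int-Z_M^r} matches the paper, except that for $r=2$ you invoke Parseval directly rather than the product formula \eqref{recursive-rel'ns} followed by orthogonality; both routes give the same computation. The one place you genuinely depart is \eqref{int-Z_M}: the paper simply sets $r=1$ in \eqref{connecting mu_p to mu_infty} and observes that $\int_0^\pi Z_M^{\pm}\, d\mu_\infty = 0$ because $Z_M^{\pm}$ has no $U_0$ component, whereas you instead integrate $Z_M^{\pm}$ against $d\mu_\nu$ term-by-term via Lemma \ref{key-lemma} and then bound the resulting geometric sum $\sum_{m \text{ even}} \hat{F}_M^{\pm}(m)\,\Nr(\pfrak)^{-m/2}$. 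Your route is valid, but the ``delicate point'' you flag about the $1/(M+1)$ contribution is not actually delicate: the coefficients $\hat{F}_M^{\pm}(m)$ are bounded uniformly (in $m$ and $M$) by a constant depending only on the interval $I$, so $\sum_{m\ge 2}\Nr(\pfrak)^{-m/2} \ll \Nr(\pfrak)^{-1}$ already closes the estimate without any comparison between $M$ and $\Nr(\pfrak)$. The paper's one-line argument via $r=1$ of \eqref{connecting mu_p to mu_infty} is marginally cleaner, but there is no substantive gap in your version.
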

\begin{proof}
    Equation \eqref{connecting mu_p to mu_infty} follows from noting that $Z_M^{\pm}(\theta)$ is bounded and that for any prime ideal $\pfrak$,
\begin{align*}
    \frac{2}{\pi}\frac{ (\Nr(\pfrak) + 1)\sin^2\theta}{\left( \Nr(\pfrak)^{\frac{1}{2}} + \Nr(\pfrak)^{-\frac{1}{2}}\right)^2 -4\cos^2\theta} &= \frac{2}{\pi}\sin^2\theta + O\left( \frac{1}{\Nr(\pfrak)}\right).
\end{align*}
Equation \eqref{int-Z_M} follows from letting $r=1$ in \eqref{connecting mu_p to mu_infty} and the orthogonality of Chebyshev polynomials $U_m(\cos\theta)$. 
Finally, 
\begin{align*}
    \int_0^\pi Z_M^{\pm}(\theta)^{2}~d\mu_\nu(\theta) &= \int_0^\pi Z_M^{\pm}(\theta)^{2} ~d\mu_\infty(\theta) + O\left( \frac{1}{\Nr(\pfrak)}\right) \quad \text{ using } \eqref{connecting mu_p to mu_infty}\\
    &= \int_0^\pi \left( \sum_{m=1}^M \hat{F}_M^{\pm}(m)U_m(\cos\theta)\right)^2 ~d\mu_\infty(\theta) + O\left( \frac{1}{\Nr(\pfrak)}\right) \\
    &= \int_0^\pi \sum_{m_1, m_2 =1}^M \hat{F}_M^{\pm}(m_1)\hat{F}_M^{\pm}(m_2 )\sum_{k= 0}^{\min\{m_1, m_2\}} U_{m_1+m_2-2k}(\cos\theta) ~d\mu_\infty(\theta) + O\left( \frac{1}{\Nr(\pfrak)}\right)\\
    &= \sum_{m=1}^M \hat{F}_M^{\pm}(m)^2 + O\left( \frac{1}{\Nr(\pfrak)}\right)
\end{align*}
using orthogonality. If $r>2$, the claimed estimate follows because $Z^{\pm}_M$ is bounded. This proves \eqref{int-Z_M^r}.
\end{proof}

We now have the tools to work out the integrals in \eqref{higher-moments-integral}. We do so by dividing the partitions into three types.\\
{\bf Case 1:} If $(r_1,\ldots,r_u)= (2,\ldots,2)$, i.e., each part is equal to $2$, then $n$ is even and $u=n/2$. In this case, the sum corresponding to this partition is
\begin{align*}
    &\frac{1}{\pi_L(x)^{\frac{n}{2}}} \frac{n!}{2^{\frac{n}{2}}\frac{n}{2}!} \sum_{(\pfrak_1, \ldots, \pfrak_u)}^{(2)} \prod_{i=1}^{n/2} \int_0^\pi Z_M^{\pm}(\theta)^{2}~d\mu_\nu(\theta)\\
    &= \frac{1}{\pi_L(x)^{\frac{n}{2}}} \frac{n!}{2^{\frac{n}{2}}\frac{n}{2}!} \sum_{(\pfrak_1, \ldots, \pfrak_u)}^{(2)} \prod_{i=1}^{n/2} \left( \sum_{m=1}^M \hat{F}_M^{\pm}(m)^2 + O\left( \frac{1}{\Nr(\pfrak_i)}\right)\right)\\
    &= \frac{n!}{2^{\frac{n}{2}}\frac{n}{2}!}\left( \sum_{m=1}^M \hat{F}_M^{\pm}(m)^2\right)^\frac{n}{2} + o(\pi_L(x)).
\end{align*}

Using equation \eqref{variance} and noting that $M$ is an increasing function of $x$, we conclude 
\begin{equation}\label{case-1-limit}
    \lim\limits_{x\to\infty} \frac{1}{\pi_L(x)^{\frac{n}{2}}} \frac{n!}{2^{\frac{n}{2}}\frac{n}{2}!} \sum_{(\pfrak_1, \ldots, \pfrak_u)}^{(2)} \prod_{i=1}^{n/2} \int_0^\pi Z_M^{\pm}(\theta)^{2}~d\mu_\nu(\theta) = \frac{n!}{2^{\frac{n}{2}}\frac{n}{2}!} (\mu_\infty(I) - \mu_\infty(I)^2)^{\frac{n}{2}}.
\end{equation}

\noindent
{\bf Case 2:} If $(r_1, \ldots, r_u)$ has $\ell$ parts equal to $1$ with $1\leq \ell\leq n$. Then $$n= r_1 +\cdots +r_u \geq \ell + 2(u-\ell).$$ Therefore in this case, $$ u-\ell \leq \frac{n-\ell}{2} \leq \frac{n-1}{2}.$$ Using equations \eqref{int-Z_M} and \eqref{int-Z_M^r}, 
\begin{align*}
    &\frac{1}{\pi_L(x)^{\frac{n}{2}}} \frac{n!}{2^{\frac{n}{2}}\frac{n}{2}!} \sum_{(\pfrak_1, \ldots, \pfrak_u)}^{(2)} \prod_{i=1}^{u} \int_0^\pi Z_M^{\pm}(\theta)^{r_i}~d\mu_\nu(\theta)\\
    &\ll \frac{1}{\pi_L(x)^{\frac{n}{2}}} (\log\log x)^{\ell} \pi_L(x)^{(u-\ell)}\\
    &\ll \pi_L(x)^{-\frac{1}{2}}(\log\log x)^{\ell}.
\end{align*} 
{\bf Case 3:} The remaining case is where $(r_1, \ldots, r_u)$ has all parts $r_i\geq 2$ and at least one part greater than or equal to $3$. In this case, it is easy to see that $u\leq \frac{n}{2}-1$. So we have 
\begin{align*}
    &\frac{1}{\pi_L(x)^{\frac{n}{2}}} \frac{n!}{2^{\frac{n}{2}}\frac{n}{2}!} \sum_{(\pfrak_1, \ldots, \pfrak_u)}^{(2)} \prod_{i=1}^{u} \int_0^\pi Z_M^{\pm}(\theta)^{r_i}~d\mu_\nu(\theta)\\
    &\ll \pi_L(x)^{-1},
\end{align*}
using \eqref{int-Z_M^r} for each part. 
Therefore, for partitions $(r_1,\ldots, r_u)$ described in Case 2 and Case 3, 
\begin{equation}\label{case2,3-limit}
    \lim\limits_{x\to\infty} \frac{1}{\pi_L(x)^{\frac{n}{2}}} \frac{n!}{2^{\frac{n}{2}}\frac{n}{2}!} \sum_{(\pfrak_1, \ldots, \pfrak_u)}^{(2)} \prod_{i=1}^{n/2} \int_0^\pi Z_M^{\pm}(\theta)^{r_i}~d\mu_\nu(\theta) = 0.
\end{equation}

Gathering equations \eqref{case-1-limit}, \eqref{case2,3-limit} and choosing $M = \lfloor \sqrt{\pi_L(x)}\log\log x \rfloor$, we have proved
\begin{equation*}
    \lim_{x\to\infty} \left\langle \left(\frac{F^{\pm}(M, \pi)(x))}{\sqrt{\pi_L(x)}}\right)^n\right\rangle  = \begin{cases}
        0 &\text{ if } n \text{ is odd,}\\
        \frac{n!}{2^{\frac{n}{2}}\frac{n}{2}!}\left( \mu_\infty(I) - \mu_\infty(I)^2\right)^\frac{n}{2} &\text{ if } n \text{ is even.}
    \end{cases}
\end{equation*}
This completes the proof. 

\section{Improvements}
We can obtain an improvement on the growth conditions on the weight vector $\underline{k}= \underline{k}(x)$ in Theorem \ref{higher-moments-theorem}, if the indicator function $\chi_I$ were to be replaced by a smooth test function, adapting the line of proof in \cite[Theorem 1.6]{Baier-Prabhu-Sinha}. As in Theorem \ref{main-thm}, $\underline{k}(x)$ is a vector $(k_1(x), \ldots, k_d(x))$ where the components run over even integers $\geq 4$. More precisely, the following holds. 

\begin{thm}\label{main-elliptic}
 Let $\Phi \in C^{\infty}(\R)$ be a real-valued, even function in the Schwartz class and $\widehat{\Phi}$ its Fourier transform. 
 Fix a real number
 $M\ge 1$ and define 
$$\phi_M(t) = \sum_{m \in \Z} \Phi(M(t+m)) \text{ and } V_{\Phi,M} = \int_0^1\phi_M(t)^2\mu_\infty(t)dt - \left( \int_0^1 \phi_M(t) \mu_\infty(t) dt \right)^2.$$
 For $\pi \in \PKN,$ define
$$N_{\Phi,M,\pi}(x) = \sum_{\substack{\Nr(\pfrak) \leq x \\ \pfrak \nmid \mathfrak{n}}} \phi_M(\theta_\pi(\pfrak)).$$
\begin{enumerate}
\item[{\bf(a)}] Suppose $\widehat{\Phi}$ is compactly supported and $\underline{k} = \underline{k}(x)$ satisfies $\frac{\sum_{i=1}^d\log k_i}{\log x} \to \infty$ as $x \to \infty.$  Then,
for any integer $r \geq 0,$
\begin{equation}\label{Gaussian}
\lim_{x \to \infty}\frac{1}{|\PKN|}\sum_{\pi \in \PKN} \left(\frac{N_{\Phi,M,\pi}(x) - {\pi}_L(x)\int_0^1 \phi_M(t) \mu_\infty(t)dt}{\sqrt{{\pi}(x)V_{\Phi,M}}}\right)^r = 
\begin{cases}
0 &\text{ if }r\text{ is odd}\\
\frac{r!}{\left(\frac{r}{2}\right)!2^{r/2}}&\text{ if }r\text{ is even.}
\end{cases}
\end{equation}

\item[{\bf(b)}] For fixed $\lambda, \omega>0$, suppose the Fourier transform $\widehat{\Phi}$ satisfies $\widehat{\Phi}(t) \ll e^{-\lambda |t|^{\omega}}$, as $|t| \to \infty.$  Then, the asymptotic \eqref{Gaussian} holds if $\underline{k} = \underline{k}(x)$ satisfies $\frac{\sum_{i=1}^d\log k_i}{(\log x)^{1+1/\omega}} \to \infty$ as $x \to \infty.$
\end{enumerate}
\end{thm}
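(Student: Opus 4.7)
The plan is to parallel the proof of Theorem \ref{higher-moments-theorem}, with the crucial simplification that the smoothness of $\phi_M$ (inherited from $\Phi$) lets us work with a rapidly convergent Chebyshev expansion directly, bypassing the Beurling--Selberg step and its attendant $1/M$ losses. First I would expand $\phi_M$ in the orthonormal basis of Chebyshev polynomials of the second kind: since $\Phi$ is even and Schwartz, Poisson summation gives
$$\phi_M(t) = \frac{1}{M}\sum_{n\in\Z}\widehat{\Phi}(n/M)\,e(nt),$$
and converting to Chebyshev polynomials via $\cos(2n\theta) = T_{2n}(\cos\theta)$ together with $2T_{2n}=U_{2n}-U_{2n-2}$ yields $\phi_M(\theta) = \sum_{m\geq 0}c_m U_m(\cos\theta)$ with $c_0 = \int_0^1\phi_M(t)\mu_\infty(t)\,dt$, $\sum_{m\geq 1}c_m^2 = V_{\Phi,M}$ (Parseval for the Sato--Tate measure), and coefficient bound $|c_m|\ll M^{-1}|\widehat{\Phi}(O(m/M))|$.

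Next, set $G_\pi(x) := N_{\Phi,M,\pi}(x) - \pi_L(x)c_0 = \sum_{m\geq 1}c_m\sum_{\Nr(\pfrak)\leq x}U_m(\cos\theta_\pi(\pfrak))$ and compute the $r$-th moment $\langle G_\pi(x)^r\rangle$. The combinatorial expansion over ordered partitions $r_1+\cdots+r_u=r$ and distinct prime tuples $(\pfrak_1,\ldots,\pfrak_u)$, followed by the application of Lemma \ref{alternate version of trace formula}, is carried out exactly as in Section \ref{higher-moments}. Using \eqref{connecting mu_p to mu_infty} and orthogonality of the $U_m$'s, the dominant contribution comes from partitions of type $(2,2,\ldots,2)$ (when $r$ is even), each factor $\int Z^2 d\mu_\nu$ contributing $V_{\Phi,M}+O(\Nr(\pfrak)^{-1})$, yielding the Gaussian moment $\frac{r!}{2^{r/2}(r/2)!}(\pi_L(x)V_{\Phi,M})^{r/2}$; the remaining partition types are $o(\pi_L(x)^{r/2})$ for the same reasons as in Cases 2 and 3 there.

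The main obstacle is controlling the total trace-formula error, which is where the growth hypothesis on $\underline{k}$ enters. Introducing a truncation $M_0 = M_0(x)$ in the Chebyshev expansion, the total error contribution is bounded (up to polynomial factors in $x$) by $\frac{1}{\prod_j k_j}\bigl(\sum_{m\leq M_0}|c_m|\,x^{3m/2}\bigr)^u$, while the tail from $m > M_0$ must be shown negligible separately. In case (a), compact support of $\widehat{\Phi}$ in $[-R,R]$ forces $c_m=0$ for $m > 2RM$, so the tail is absent, $M_0 = O_{R,M}(1)$, and the error reduces to $x^{O_{R,M}(1)}/\prod_j k_j$, which is $o(1)$ precisely under $\sum_j\log k_j/\log x\to\infty$. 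In case (b), I would choose $M_0 \asymp M(\log x)^{1/\omega}$: the tail bound $\sum_{m>M_0}|c_m|\ll \exp(-\lambda(M_0/M)^\omega)$ is then polynomially small in $x$, while the quantity at the cutoff satisfies $\exp\bigl(-\lambda(M_0/M)^\omega + (3M_0/2)\log x\bigr) = \exp(O_M((\log x)^{1+1/\omega}))$, so the error is $o(1)$ precisely when $\sum_j\log k_j/(\log x)^{1+1/\omega}\to\infty$. With these estimates in hand, the method of moments—combined with the characterization of the normal distribution by its moments, as in Section \ref{Outline}—completes the proof.
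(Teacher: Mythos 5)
The paper does not actually prove this theorem. Section 6 states it without a proof environment and remarks only that such a ``smooth version ... should hold'' by ``adapting the line of proof in \cite[Theorem 1.6]{Baier-Prabhu-Sinha}.'' So there is no in-paper proof to compare yours against; you are filling a gap the authors leave to the reader.

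Your sketch is the natural execution of that remark, and it is consistent with the machinery developed in Sections 2 and 5: periodize $\Phi$, use Poisson summation, pass to the $U_m(\cos\theta)$ basis, run the same partition/trace-formula combinatorics with Lemma \ref{alternate version of trace formula}, and let the decay of $\widehat{\Phi}$ (rather than Beurling--Selberg truncation) control the tail. The split into cases (a) and (b), with the truncation point $M_0$ calibrated to the support or decay of $\widehat{\Phi}$, correctly reproduces the two growth thresholds $\sum_i \log k_i /\log x \to\infty$ and $\sum_i\log k_i/(\log x)^{1+1/\omega}\to\infty$: in each case the dominant error from the trace formula is $\exp(c\,M_0\log x)/\prod_i k_i$, and your choice of $M_0$ (bounded, resp.\ $\asymp M(\log x)^{1/\omega}$) makes this $o(1)$ precisely under the stated hypothesis, while the discarded tail of the Chebyshev coefficients is zero (resp.\ superpolynomially small, provided the constant in $M_0$ is chosen large). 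Two points worth tightening if you write this up: (i) the paper's own normalization is internally inconsistent about whether the Satake angle lives in $[0,1]$ or $[0,\pi]$, and the theorem statement integrates $\mu_\infty$ over $[0,1]$ whereas Sections 2 and 5 use $[0,\pi]$; your Poisson-summation step and the identity $\cos(2n\theta)=T_{2n}(\cos\theta)$ need a consistent rescaling before the Parseval identity $\sum_{m\geq 1}c_m^2 = V_{\Phi,M}$ can be asserted. (ii) Your error bound $\bigl(\sum_{m\leq M_0}|c_m|x^{3m/2}\bigr)^u$ treats each of the $u$ factors as if $r_i=1$; for $r_i\geq 2$ you are really expanding $Z^{r_i}$, whose Chebyshev degree is $M_0 r_i$ and whose coefficients you must bound via $\|Z\|_\infty^{r_i-1}$ (as the paper does for $Z_M^\pm$), giving $x^{3M_0 r/2}$ rather than $x^{3M_0 u/2}$. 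This is a constant-in-the-exponent difference and does not affect either threshold, but the intermediate inequality as written is not quite right.
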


\bibliographystyle{alpha}
\bibliography{Sato-TateNT.bib}

\end{document}